\newcommand{\xdownarrow}[1]{%
    {\left\downarrow\vbox to #1{}\right.\kern-\nulldelimiterspace}}
\newcommand{\Z}{\mathbb{Z}}
\newcommand{\Q}{\mathbb{Q}}
\newcommand{\bq }{\begin{equation}}
\newcommand{\eq }{\end{equation}}
\theoremstyle{plain}
\newtheorem{thm}{Theorem}[section]
\newtheorem{lem}[thm]{Lemma}
\newtheorem{prop}[thm]{Proposition}
\newtheorem{cor}[thm]{Corollary}
\newtheorem{rem}[thm]{Remark}
\theoremstyle{definition}
\newtheorem{defn}[thm]{Definition}
\theoremstyle{example}
\title{A note on the stratification by automorphisms of smooth plane curves of genus $6$}
\author[E. Badr] {Eslam Badr}
\address{$\bullet$\,\,Eslam Essam Ebrahim Farag Badr}
\address{Departament Matem\`atiques, Edif. C, Universitat Aut\`onoma de Barcelona\\
08193 Bellaterra, Catalonia, Spain} \email{eslam@mat.uab.cat}
\address{Department of Mathematics,
Faculty of Science, Cairo University, Giza-Egypt}
\email{eslam@sci.cu.edu.eg}
\author[E. Lorenzo]{Elisa Lorenzo Garc\'ia}
\address{$\bullet$\,\,Elisa Lorenzo Garc\'ia}
\address{IRMAR - Universit\'e de Rennes 1\\
35042 Rennes Cedex, France}
\email{elisa.lorenzogarcia@univ-rennes1.fr}
\thanks{E. Badr is supported by MTM2013-40680-P}
\keywords{Moduli space, Plane curves, Representative families}
\subjclass[2010]{11G30, 14D22, 14H50, 37P45}
\begin{document}

\maketitle

\begin{abstract} In this note, we give a so-called representative classification for the strata by automorphism group of smooth $\overline{k}$-plane curves of genus $6$, where $\overline{k}$ is a fixed separable closure of a field $k$ of characteristic $p=0$ or $p>13$. We start with a classification already obtained in \cite{BaBa3} and we mimic the techniques in \cite{LeRiRo} and \cite{Loth}.
	
Interestingly, in the way to get these families for the different strata, we find two remarkable phenomenons that did not appear before. One is the existence of a non $0$-dimensional \textit{final stratum} of plane curves. At a first sight it may sound odd, but we will see that this is a normal situation for higher degrees and we will give a explanation for it.

We explicitly describe representative families for all strata, except for the stratum with automorphism group $\Z/5\Z$. Here we find the second difference with the lower genus cases where the previous techniques do not fully work. Fortunately, we are still able to prove the existence of such family by applying a version of L\"{u}roth's theorem in dimension $2$.
\end{abstract}

\section{Introduction}
Let $k$ be a field of characteristic $p\geq0$, and fix a separable algebraic closure $\overline{k}$ of $k$. By a smooth $\overline{k}$-plane curve $C$ of genus $g$, we mean a smooth projective curve $C$ that admits a non-singular plane model $\{F_C=0\}\subseteq\mathbb{P}^{2}_{\overline{k}}$ over $\overline{k}$ of degree $d$, in this case, the genus $g$ equals $\frac{1}{2}(d-1)(d-2)$.

It might happen for some genus $g$ that no smooth $\overline{k}$-plane curves exist. The first genus for which there exist smooth $\overline{k}$-plane curves are: $0,1,3,6,...$ The curves of genus $0$ are isomorphic to the projective line, and the curves of genus $1$ are elliptic ones, which are quite well understood. For genus $3$, we get plane quartic curves, and different arithmetics properties have been investigated by many people around. We mention, for example, a classification up to isomorphism with good properties that can be found in \cite{LeRiRo,Loth}, or the study of their twists in \cite{Loth,Lo2}.

For genus $6$, the dimension of the (coarse) moduli space $\mathcal{M}_6$ of smooth curves of genus $g=6$ over $\overline{k}$ is equal to $3g-3=15$. The stratum $\mathcal{M}_6^{Pl}$ of smooth $\overline{k}$-plane curves of genus $6$ has dimension equal to $21-9=12$, since there are $21$ monic monomial of degree $5$ in $3$ variables and all the isomorphisms are given by projective matrices of size $3\times 3$. In particular, this dimension is larger than the dimension of the hyperelliptic locus, which is $2g-1=11$. A classification, up to $\overline{k}$-isomorphism, to smooth plane curves of genus $6$, and the determination of the full automorphism groups are given by the first author, et al. in \cite{BaBa3}.

The aim of this note is to refine the classification made in \cite{BaBa3}, by providing better families for the strata in $\mathcal{M}_6^{Pl}$. By better families, we mean that the set of normal forms with parameters describing the different strata of $\mathcal{M}_6^{Pl}$ give a bijection with the points in each stratum and the parameters are defined over the field of moduli of the representing point.  In particular, we look for complete and representative families over $k$, which are good substitutes to the notion of \emph{universal families} of (coarse) moduli spaces, especially when the spaces have no extra structures. These concepts have been introduced by R. Lercier, et al. in \cite[\S2]{LeRiRo}.

We find a representative family for each of the strata in \cite{BaBa3} by using the techniques in \cite{LeRiRo, Loth} except for a special one. For this family, we do not write down the representative family, but we prove that it exists and we give a recipe to construct it. We do it by applying  a version of L\"{u}roth's theorem in dimension $2$.

Another interesting phenomenon appearing is the existence of a final stratum of plane curves whose dimension is not zero. By a final stratum we mean a stratum not containing any other proper stratum. One could expect that by adding restrictions in the parameters of a family defining a stratum with a given automorphisms group, one could get bigger automorphism groups until obtaining a zero-dimensional stratum. This happens for all the families except for one. For this family each restriction in the parameters providing a bigger automorphism group yields a singular curve. We find an explanation for this fact: this family can be embedded in a family of curves of genus $6$ with the same automorphism group for which we can carry out the previous operation without getting singular curves, the key point is they are not plane curves anymore. Moreover, we prove that this may happen in general for higher genera.

Finally, we mention that the obtained representative classification for smooth quintic curves by automorphism groups is the first step for the computation of the twists of these curves. This will be done by the first author in his thesis \cite{Es}.

\subsection*{Acknowledgments}
The authors would like to thank Francesc Bars Cortina for his fruitful comments and suggestions for the preparation of this note.

\section{Preliminaries}

\subsection{Complete and representative families}
It is well known that the (coarse) moduli spaces $\mathcal{M}_g$ are algebraic varieties whose geometric points give a classification of isomorphism classes of smooth curves of genus $g$. The existence of universal families for a moduli space helps to recover the information on its points and allows to write down the attached objects to a point of this space. However, universal families do not exist for the moduli space $\mathcal{M}_g$.

R. Lercier, et al in \cite[\S2]{LeRiRo} introduced three good substitutes for the notion of universal family in our case: complete, finite and representative families.

We recall the basic definitions of such families over a field $k$ of characteristic $p=0$ or $p>2g+1$. For more details, we refer to \cite[\S2]{LeRiRo}:

\begin{defn}
Let $\verb"S"$ be a scheme over $k$. A curve of genus $g\geq2$ over $\verb"S"$ is a morphism of schemes $\verb"C"\rightarrow\verb"S"$ that is proper and smooth with geometrically irreducible fibers of dimension $1$ and genus $g$.
\end{defn}

\begin{defn}\label{FamComRep}[complete, finite, representative family]
Let $\verb"C"$ be a curve over a scheme $\verb"S"$, and assume that each geometric fiber of $\verb"C"$ corresponds to a point of a fixed stratum $S\subseteq M_g$. We then get a morphism $f_{\verb"C"}: \verb"S"\rightarrow S$ over $k$. The family $\verb"C"\rightarrow\verb"S"$ is \emph{complete} (resp. \textit{representative}) for the stratum $S$ if $f_{\verb"C"}$ is surjective (resp. bijective) on $F$-points for every algebraic extension $F/k$. If the family is complete and all the fibers of $f_{\verb"C"}$ are finite and with bounded cardinality, we say that the family is \textit{finite}. In particular, if a family is finite the dimension of the family is equal to the dimension of the scheme $\verb"S"$.

The family $\verb"C"\rightarrow\verb"S"$ is \textit{geometrically complete} (\textit{representative}) if it is complete (\textit{representative}) after extending the scalers to $\overline{k}$.
\end{defn}

We also remark the following:
\begin{rem}\label{defmod} If a family defined over $k$ is geometrically representative, then it is representative and complete (see \cite[Lemma 2.2]{LeRiRo}). Moreover, in this case the field of moduli is a field of definition.
\end{rem}

\subsection{Smooth plane curves of genus $6$: the strata $\widetilde{\mathcal{M}_6^{Pl}}(G)$}
Consider the set $\mathcal{M}_g^{Pl}$ of all isomorphism classes of smooth curves of $\mathcal{M}_g$, admitting a non-singular plane model over $\overline{k}$. For simplicity, we call such curves smooth $\overline{k}$-plane curves. For a finite group $G$, we denote by $\mathcal{M}_g^{Pl}(G)$ the set of all elements $[C]$ of $\mathcal{M}_g^{Pl}$ such that $G$ is isomorphic to a subgroup of the full automorphism group $Aut(C)$. We use the notation $\widetilde{\mathcal{M}_g^{Pl}}(G)$ for the subset of $\mathcal{M}_g^{Pl}(G)$ whose elements satisfies $G\cong Aut(C)$.

The finite groups $G$ for which $\widetilde{\mathcal{M}_6^{Pl}}(G)$ is non-empty are determined in \cite{BaBa3}. Moreover, geometrically complete families that depend on a fixed injective representation $\varrho:\,\operatorname{Aut}(C)\hookrightarrow\operatorname{PGL}_{3}(\overline{k})$ are given.
\begin{thm}[Badr-Bars]\label{autoGps}
Let $\overline{k}$ be a fixed separable closure of a field $k$ of characteristic $p$ with $p=0$ or $p>13$.
The following table gives the complete list of automorphism
groups of non-singular plane curves of degree $5$ over $\overline{k}$, along with geometrically complete families over $k$ for the associated strata. We denote by $L_{i,B}$ a homogeneous polynomial of degree $i$ in the variables $\{X,Y,Z\}\setminus \{B\}$.
\scriptsize
\begin{center}
\begin{longtable}{|c|c|c|c|}
  \hline
  Case & $G$ &$\varrho(G)$& $F_{\varrho(G)}(X,Y,Z)$  \\
  \hline
  1&$\operatorname{\operatorname{GAP}}(150,5)$\footnote{We use the $\operatorname{GAP}$ library \cite{GAP} notations for small finite group.}& $[\zeta_{5}X:Y:Z],[X:\zeta_{5}Y:Z]$ & $X^5+Y^5+Z^5$  \\
           && $[X:Z:Y],\,[Y:Z:X]$   & \\ \hline
  2&$\operatorname{GAP}(39,1)$& $[X:\zeta_{13} Y:\zeta_{13}^{10} Z],[Y:Z:X]$& $X^4Y+Y^4Z+Z^4X$   \\ \hline
  3&$\operatorname{GAP}(30,1)$ & $[X:\zeta_{15} Y:\zeta_{15}^{11}Z],[X:Z:Y]$ & $X^5+Y^4Z+YZ^4$\\ \hline
    4&$\Z/{20}\Z$& $[X:\zeta_{20}^4 Y:\zeta_{20}^5 Z]$ & $X^5+Y^5+XZ^4$    \\ \hline
  5&$\Z/{16}\Z$& $[X:\zeta_{16} Y:\zeta_{16}^{12} Z]$& $X^5+Y^4Z+XZ^4$    \\ \hline
6&$\Z/{10}\Z$& $[X:\zeta_{10}^2Y:\zeta_{10}^5Z]$& $X^5+Y^5+ XZ^4+\beta_{2,0}X^3Z^2$\\ \hline
7&$D_{10}$& $[X:\zeta_5 Y:\zeta_5^2 Z],\, [X:Z:Y]$ & $X^5+Y^5+Z^5+\beta_{3,1}XY^2Z^2+\beta_{4,3}X^3YZ$\\ \hline
8&$\Z/8\Z$&$[X:\zeta_{8} Y:\zeta_{8}^{4} Z]$& $X^5+Y^4Z+XZ^4+\beta_{2,0}X^3Z^2$
\\ \hline
 9&$S_3$& $[X:\zeta_3Y:\zeta_3^2Z]$& $X^5+Y^4Z+YZ^4+\beta_{2,1}X^3YZ+\beta_{3,3} X^2\big(Z^3+Y^3\big)+$  \\
 && $[X:Z:Y]$  &$+\beta_{4,2}XY^2Z^2$ \\ \hline
10&$\Z/5\Z$& $[X:Y:\zeta_5Z]$& $Z^5+L_{5,Z}$\\ \hline
11&$\Z/4\Z$&  $[X:\zeta_4Y:\zeta_4^2Z]$& $X^5+X\big(Z^4+Y^4\big)+\beta_{2,0}X^3Z^2+\beta_{3,2}X^2Y^2Z+\beta_{5,2}Y^2Z^3$   \\ \hline
 12&$\Z/4\Z$& $[X:Y:\zeta_4Z]$& $Z^4L_{1,Z}+L_{5,Z}$     \\ \hline
 13&$\Z/3\Z$& $[X:\zeta_3Y:\zeta_3^2Z]$& $X^5+Y^4Z+YZ^4+\beta_{2,1}X^3YZ+$  \\
&&&
$+X^2\big(\beta_{3,0}Z^3+\beta_{3,3}Y^3\big)+\beta_{4,2}XY^2Z^2$  \\ \hline
 14&$\Z/2\Z$&  $[X:Y:\zeta_2Z]$& $Z^4L_{1,Z}+Z^2L_{3,Z}+L_{5,Z}$     \\ \hline
15&$\{1\}$&  $[X:Y:Z]$& $L_5(X,Y,Z)$     \\ \hline
\caption{Geometrically complete families over $k$}\label{table:FullAuto.}
\end{longtable}
\end{center}
\normalsize
\vspace{-1cm}

The algebraic restrictions on the coefficients, so that each family is smooth, geometrically irreducible, and has no larger automorphism group are not given.
\end{thm}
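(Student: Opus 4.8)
The plan is to reduce the statement to the classification of finite subgroups of $\operatorname{PGL}_3(\overline{k})$ together with a finite invariant-theoretic computation in degree $5$. First I would invoke the classical fact that for a smooth plane curve $C$ of degree $d\geq 4$ the embedding line bundle $\mathcal{O}_C(1)$ is intrinsic: since $K_C\cong\mathcal{O}_C(d-3)$, the $g^2_d$ is uniquely determined by the canonical class and is therefore preserved by every automorphism. Hence $\operatorname{Aut}(C)$ is induced by projective linear transformations and embeds in $\operatorname{PGL}_3(\overline{k})$. The hypothesis $p=0$ or $p>13$ is exactly what guarantees this reduction and, moreover, that all group orders occurring below (whose prime factors are $2,3,5,13$) are prime to $p$, so that Maschke's theorem applies and the representation theory over $\overline{k}$ behaves as in characteristic zero. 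The problem thus becomes: determine which finite $G\leq\operatorname{PGL}_3(\overline{k})$ stabilize a smooth quintic, and for each such $G$ describe the linear system of $G$-invariant quintic forms.

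Next I would appeal to the Blichfeldt--Mitchell classification of finite subgroups of $\operatorname{PGL}_3$, organized into intransitive, imprimitive (monomial), and primitive types. For each conjugacy class of candidate subgroup I would place $G$ in a normal form --- typically a diagonal abelian part generated by scalings $[\zeta\, X:\ldots]$ together with the coordinate-permutation or monomial generators displayed in the table --- and then extract the $G$-fixed subspace of the $21$-dimensional space of quintic monomials. This is a character computation, equivalently reading off the relevant constant term of a Molien series, and it produces precisely the normal forms $F_{\varrho(G)}$ with the free coefficients $\beta_{i,j}$ (and the generic forms $L_{i,B}$) as parameters. Existence of at least one smooth $G$-invariant quintic, and the genericity of smoothness inside each family, is an open condition that I would certify by exhibiting one smooth member: the Fermat quintic $X^5+Y^5+Z^5$ for Case~$1$ (whose automorphism group $(\Z/5\Z)^2\rtimes S_3$ has order $150$) and the Klein-type quintic $X^4Y+Y^4Z+Z^4X$ for Case~$2$ (with cyclic symmetry of order $5^2-3\cdot 5+3=13$ and a $3$-cycle, giving order $39$).

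To obtain \emph{geometrically complete} families I must show that every smooth quintic whose automorphism group contains a copy of $G$ is projectively equivalent to a member of the corresponding family. The key point is that the relevant lift $G\hookrightarrow\operatorname{GL}_3(\overline{k})$ is unique up to conjugacy and the action of $\operatorname{Aut}(G)$; after a suitable change of coordinates the invariant curve is therefore defined by a form in the listed linear system. This is what identifies each stratum with its normal form, up to the residual action of the normalizer $N_{\operatorname{PGL}_3}(\varrho(G))$, and yields surjectivity of $f_{\verb"C"}$ on $\overline{k}$-points as required by Definition~\ref{FamComRep}.

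I expect the main obstacle to be the sharp determination of the \emph{full} automorphism group, i.e. showing that for generic parameter values the displayed $G$ is all of $\operatorname{Aut}(C)$ rather than a proper subgroup of a larger stabilizer. This forces one to compute, for each family, the subgroup of $N_{\operatorname{PGL}_3}(\varrho(G))$ that preserves the linear system $F_{\varrho(G)}$, to rule out accidental extra symmetries of a generic member, and simultaneously to organize the strata into an inclusion poset so that the specializations producing the larger groups (Cases~$1$--$5$ and the like) are exactly accounted for. As stated, the theorem asserts only geometric completeness and explicitly defers the cutting-out of $\widetilde{\mathcal{M}_6^{Pl}}(G)$ --- the smoothness, irreducibility, and ``no larger automorphism group'' conditions --- to the refinement carried out in the body of the paper; establishing those non-degeneracy inequalities uniformly for all $p>13$ is the delicate remaining step.
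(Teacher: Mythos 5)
First, a point of comparison: this note contains no proof of Theorem~\ref{autoGps} at all. It is quoted as a known result from \cite{BaBa3} (hence the attribution ``Badr--Bars''), and the note's own contribution begins afterwards, refining these geometrically complete families into representative ones. So your proposal can only be measured against the strategy of \cite{BaBa3}, which the note itself summarizes later (Theorem~\ref{Harui,Mitchell} and Remark~\ref{postive}): linearity of automorphisms of smooth plane curves of degree $\geq 4$, then the Mitchell--Harui trichotomy (a fixed line and a point off it; a fixed triangle, i.e.\ descendants of Fermat or Klein; or a primitive subgroup of $\operatorname{PGL}_3(\overline{k})$), then a case-by-case determination of invariant quintics. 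Your outline --- reduction to $\operatorname{PGL}_3(\overline{k})$ via $K_C\cong\mathcal{O}_C(d-3)$, the Blichfeldt--Mitchell classification, and the invariant/Molien computation per group --- is that same strategy in substance, and your identification of the hard part (pinning down the \emph{full} automorphism group for generic parameters) is accurate. One smaller inaccuracy: the hypothesis $p=0$ or $p>13$ is not what makes $\operatorname{Aut}(C)$ linear (that holds more generally); it is what makes the Mitchell--Harui classification applicable, via $p>2g+1$ as in Remark~\ref{postive}.

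There is, however, one genuine error, and it is not cosmetic: you claim that ``the relevant lift $G\hookrightarrow\operatorname{GL}_3(\overline{k})$ is unique up to conjugacy and the action of $\operatorname{Aut}(G)$,'' and you use this to deduce geometric completeness. This is false, and the theorem's own table refutes it: $\Z/4\Z$ occurs twice (cases 11 and 12) with non-conjugate representations $[X:\zeta_4Y:\zeta_4^2Z]$ and $[X:Y:\zeta_4Z]$ --- one with distinct eigenvalues, one a homology --- and these give genuinely different strata with different normal forms; no composition with an automorphism of $\Z/4\Z$ conjugates one into the other. This is precisely why the paper indexes strata by the injective representation $\varrho$ rather than by the abstract group $G$, writing $\widetilde{\mathcal{M}_6^{Pl}}(\varrho(G))$. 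The bulk of the actual work in \cite{BaBa3} is exactly the step your assumption erases: enumerating, for each cyclic group $\Z/n\Z$, the conjugacy classes of embeddings (the diagonal ``types'' $\operatorname{diag}(1,\zeta_n^a,\zeta_n^b)$) that can stabilize a smooth quintic, discarding those forcing singular or reducible invariant curves, and then deciding which of the surviving types extend to the larger groups in the list. Without that enumeration your argument would merge, or silently lose, strata that the classification must keep separate, so the proposal as written does not establish the completeness (nor even the correctness of the list) asserted by the theorem.
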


\begin{rem}
The geometrically complete families over $k$ given in Theorem \ref{autoGps} are not necessarily representative or even complete.
For example, the smooth $\overline{\Q}$-plane curve $C:\,X^5+Y^5+\frac{1}{2}XZ^4+X^3Z^2=0$ has two representatives in the above list, which are $X^5+Y^5+XZ^4\pm\sqrt{2}X^3Z^2=0$ and none of them is defined over $\Q$.
\end{rem}

\section{Representative families for $\widetilde{\mathcal{M}_6^{Pl}}(\varrho(G))$}
This section is devoted to give representative families over $k$ for the different strata in Theorem \ref{autoGps} for $\overline{k}$-plane curves of genus $6$. As in \cite{LeRiRo}, the first step is computing the normalizers of the groups $\rho(G)$ in Theorem \ref{autoGps}.

Throughout this section, $k$ has characteristic $p=0$ or $p>13$.
\subsection{Computation of the normalizers $N_{\varrho(G)}(\overline{k})$}
We start with the families in Theorem \ref{autoGps}, which are geometrically complete over $k$ for each of the strata $\widetilde{\mathcal{M}_6^{Pl}}(\varrho(G))$. Isomorphisms between two curves in the same family, in particular with identical automorphism group $\varrho(G)$ in $\text{PGL}_3(\overline{k})$, are clearly given by $3\times3$ projective matrices in the normalizer $N_{\varrho(G)}(\overline{k})$ over $\overline{k}$.

\textbf{Notations.}
By $D(\overline{k})$, we mean the group of diagonal matrices in $\operatorname{PGL}_3(\overline{k})$, and by $T_X(\overline{k})$, we denote its subgroup of elements the shapes $[a X:Y:Z]$ for some $a\in\overline{k}$. Similarly, we define $T_Y(\overline{k})$ and $T_Z(\overline{k})$.

Second, through the embedding $\operatorname{GL}_2(\overline{k})\hookrightarrow\operatorname{PGL}_3(\overline{k}):\,A\mapsto  \left(\begin{array}{cc}
              1 & 0 \\
              0 & A \\
            \end{array}
          \right)$, we can identify $\operatorname{GL}_2(\overline{k})$ with its image in $\operatorname{PGL}_3(\overline{k})$. We denote this image by $\operatorname{GL}_{2,X}(\overline{k})$. In the same way, we define $\operatorname{GL}_{2,Y}(\overline{k})$ and $\operatorname{GL}_{2,Z}(\overline{k})$.

Third, the following subgroups of $\operatorname{PGL}_3(\overline{k})$ are also considered: $\tilde{S_3}:=\langle[X:Z:Y],[Z:X:Y]\rangle,\,G_{03}:=\langle[X:Z:Y],[X:\zeta_3 Y:Z]\rangle,$ and
$G_{05}:=\langle[X:Z:X],\,[X:Y:\zeta_5Z]\rangle.$

\begin{thm}\label{genorm}
Let $\varrho(G)$ be one of the automorphism groups given by Theorem \ref{autoGps}, such that $\widetilde{\mathcal{M}_6^{Pl}}(\varrho(G))$ is not $0$-dimensional. The normalizer $N_{\varrho(G)}(\overline{k})$ of $\varrho(G)$ in $\operatorname{PGL}_3(\overline{k})$ is generated by:
\begin{multicols}{2}
\begin{itemize}
  \item $\operatorname{N}_{\{1\}}(\overline{k})=\operatorname{PGL}_3(\overline{k})$;
  \item $\operatorname{N}_{\varrho(\Z/2\Z)}(\overline{k})=\operatorname{GL}_{2,Z}(\overline{k})$;
  \item $\operatorname{N}_{\varrho(\Z/3\Z)}(\overline{k})=\langle D(\overline{k}), \tilde{S_3}\rangle$;
\item $\operatorname{N}_{\varrho(\Z/4\Z)}(\overline{k})=\operatorname{GL}_{2,Z}(\overline{k})$;
\item $\operatorname{N}_{\varrho(\Z/4\Z)}(\overline{k})=\langle D(\overline{k}), [Z:Y:X]\rangle$;
\item $\operatorname{N}_{\varrho(\Z/5\Z)}(\overline{k})=\operatorname{GL}_{2,Z}(\overline{k})$;
\item $\operatorname{N}_{\varrho(S_3)}(\overline{k})=\langle T_X(\overline{k}), G_{03}\rangle$;
\item $\operatorname{N}_{\varrho(\Z/8\Z)}(\overline{k})=\langle D(\overline{k}), [Z:Y:X]\rangle$;
\item $\operatorname{N}_{\varrho(D_{10})}(\overline{k})=\langle T_X(\overline{k}),\,G_{05}\rangle$ ;
\item $\operatorname{N}_{\varrho(\Z/10\Z)}=D(\overline{k})$.
\end{itemize}
\end{multicols}
\end{thm}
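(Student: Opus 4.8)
The plan is to compute each normalizer straight from the definition $N_{\varrho(G)}(\overline{k})=\{M\in\operatorname{PGL}_3(\overline{k}):M\varrho(G)M^{-1}=\varrho(G)\}$, exploiting that every generator listed in Theorem~\ref{autoGps} is a \emph{monomial} matrix, i.e. a diagonal matrix times a coordinate permutation. The backbone is the following reduction. Suppose $\varrho(G)$ contains a diagonal matrix $g$ with three pairwise distinct eigenvalues, so that its eigenpoints are exactly the coordinate points $[1:0:0],[0:1:0],[0:0:1]$. For $M\in N_{\varrho(G)}(\overline{k})$ the conjugate $MgM^{-1}$ again lies in $\varrho(G)$ and is again regular semisimple; but the only elements of $\varrho(G)$ with three distinct eigenvalues are diagonal (they are the powers of $g$, whereas the reflections occurring in cases $7$ and $9$ each have a repeated eigenvalue), so $MgM^{-1}$ is diagonal. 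Hence $M$ maps the three eigenpoints of $g$ onto the three eigenpoints of $MgM^{-1}$, i.e. it permutes the coordinate points, so $M$ is monomial. This proves $N_{\varrho(G)}(\overline{k})\subseteq N_{\operatorname{PGL}_3(\overline{k})}(D(\overline{k}))$, the group of monomial matrices, which is the usual extension of $D(\overline{k})$ by the Weyl group $S_3$ of coordinate permutations. Such a $g$ is present in cases $6,7,8,9,11$ and $13$.

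The three ``thin'' strata, cases $10,12,14$, are cyclic, generated by $g=[X:Y:\zeta_n Z]=\operatorname{diag}(1,1,\zeta_n)$ with $n=5,4,2$; here $g$ has the repeated eigenvalue $1$, so the argument above does not apply and I would instead argue with fixed loci. The eigenconfiguration of $g$ consists of the line $\{Z=0\}$ (eigenvalue $1$, multiplicity $2$) together with the point $[0:0:1]$ (eigenvalue $\zeta_n$). If $M\in N_{\varrho(G)}(\overline{k})$ then $MgM^{-1}=g^{j}$ for some $j$ with $g^{j}$ of the same order as $g$, and every such $g^{j}$ has exactly this eigenconfiguration; therefore $M$ must fix the line $\{Z=0\}$ and the point $[0:0:1]$, which forces $M\in\operatorname{GL}_{2,Z}(\overline{k})$. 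Conversely every element of $\operatorname{GL}_{2,Z}(\overline{k})$ commutes with $g$, giving $N_{\varrho(G)}(\overline{k})=\operatorname{GL}_{2,Z}(\overline{k})$. The case $N_{\{1\}}(\overline{k})=\operatorname{PGL}_3(\overline{k})$ is immediate.

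Back in the monomial regime the key observation is that conjugation of a diagonal matrix by a monomial matrix $M=D'P_{\sigma}$ depends only on the permutation $\sigma$ (it permutes the diagonal entries), while the diagonal factor $D'$ acts trivially; moreover $D(\overline{k})$, being abelian, always normalizes $\varrho(G)$ when $\varrho(G)\subseteq D(\overline{k})$. Thus for the purely diagonal groups, cases $6,8,11,13$, the task reduces to deciding, for each of the six $\sigma\in S_3$, whether permuting the entries of the generator lands back in the group projectively. Running over the permutations, a short root-of-unity computation would give: all of $S_3$ for case $13$, so $N=\langle D(\overline{k}),\tilde{S_3}\rangle$; only the transposition $[Z:Y:X]$ for cases $8$ and $11$; and no nontrivial permutation for case $6$, so $N=D(\overline{k})$.

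The subtlest cases are $9$ (group $S_3$) and $7$ (group $D_{10}$), each generated by a regular diagonal $g$ of order $3$ (resp. $5$) together with the transposition $\tau=[X:Z:Y]$, where one must additionally control the image of $\tau$. Writing $M=D'P_{\sigma}$, the condition $M\tau M^{-1}\in\varrho(G)$ first constrains the underlying permutation: since every reflection of $\varrho(G)$ has underlying permutation $(Y\,Z)$, the permutation $\sigma(Y\,Z)\sigma^{-1}$ must equal $(Y\,Z)$, forcing $\sigma$ to preserve $\{Y,Z\}$, so $\sigma\in\{e,(Y\,Z)\}$. With $\sigma=e$, a direct computation of $D'\tau D'^{-1}$ shows it is a reflection of $\varrho(G)$ exactly when the ratio $d_2/d_3$ is an $n$-th root of unity, while $d_1$ remains free; this is precisely the subgroup generated by $T_X(\overline{k})$ and an order-$n$ diagonal matrix fixing the coordinate $X$. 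Adjoining the reflection $\tau$ itself then yields $\langle T_X(\overline{k}),G_{03}\rangle$ for case $9$ and $\langle T_X(\overline{k}),G_{05}\rangle$ for case $7$. I expect the main obstacle to be exactly this bookkeeping in cases $7$ and $9$: one has to juggle the permutation action on the coordinate points and the diagonal scalars simultaneously, and then verify the converse inclusions—that the listed generators genuinely normalize $\varrho(G)$—which, although routine, is where sign and root-of-unity slips are easiest to make.
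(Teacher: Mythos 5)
Your proposal is correct and takes essentially the same route as the paper's proof: conjugation must send the distinguished diagonal generator to a group element of the same order, so its eigenstructure forces any normalizing matrix to be monomial (three distinct eigenvalues, hence permuting the coordinate points) or of block type $\operatorname{GL}_{2,Z}(\overline{k})$ (repeated eigenvalue, hence preserving the fixed line and point), after which one sorts out the surviving permutations and scalars by root-of-unity bookkeeping. The only difference is one of completeness rather than method: the paper writes out just $\varrho(\Z/3\Z)$ and $\varrho(S_3)$ and declares the rest analogous, while you treat all strata explicitly, including the homology-type generators $[X:Y:\zeta_n Z]$ that genuinely require the fixed-locus variant of the argument, and your case-by-case conclusions (all of $\tilde{S_3}$ for $\Z/3\Z$, only $[Z:Y:X]$ for $\Z/8\Z$ and the first $\Z/4\Z$, no nontrivial permutation for $\Z/10\Z$, and $\langle T_X(\overline{k}),G_{0n}\rangle$ for the dihedral cases) all check out.
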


\begin{proof}The Theorem is a straightforward implication from the well-known result that says that two non-singular matrices commute if and only if there is a common basis in which both of them diagonalize or one is a multiple of the identity. As an example, we prove the cases $\varrho(\Z/3\Z)$ and $\varrho(S_3)$ simultaneously, and the remaining situations are proven in the same way: if $\phi\in \operatorname{N}_{\varrho(\Z/3\Z)}(\overline{k})$, then $\phi^{-1}\operatorname{diag}(1,\zeta_3,\zeta_3^2)\phi= \operatorname{diag}(1,\zeta_3,\zeta_3^2),$ or $diag(1,\zeta_3^2,\zeta_3)$. Hence, $\phi$ is diagonal or a permutation of the variables, up to a re-scaling. In particular, $\phi$ is a product of an element of $D(\overline{k})$ and an element of $\tilde{S_3}$, which gives the situation for $\varrho(\Z/3\Z)$. On the other hand $\operatorname{N}_{\varrho(S_3)}(\overline{k})\subseteq \operatorname{N}_{\varrho(\Z/3\Z)}(\overline{k})$. Furthermore, if $\phi\in \operatorname{N}_{\varrho(\Z/3\Z)}(\overline{k})$ such that $\phi^{-1}[X:Z:Y]\phi$ is of order $2$ in $\varrho(S_3)=\langle [X:\zeta_3Y:\zeta_3^2Z],\,[X:Z:Y]\rangle$, then $$\phi\in\{\operatorname{diag}(a,\zeta_3^r,1),[a X:\zeta_3^rZ:Y]\,|\,a\in \overline{k}\,\,and\,\,0\leq r\leq2\}.$$
Rewriting $\operatorname{diag}(a,\zeta_3^r,1)$ as $\operatorname{diag}(a,1,1)\operatorname{diag}(1,\zeta_3,1)^r,$ and $[a X:\zeta_3^rZ:Y]$ as $\operatorname{diag}(a,\zeta_3^r,1)[X:Z:Y]$, gives the conclusion for $\varrho(S_3)$.
\end{proof}

\subsection{Representative families for $\widetilde{\mathcal{M}_6^{Pl}}(\varrho(G))$}
We study the existence of representative families over $k$ for the different strata, i.e families of curves $\verb"C"\rightarrow\verb"S"$ whose points are in natural bijection with the subvarieties.

R. Lercier, et al. in \cite{LeRiRo} explicitly constructed such families for smooth plane quartics in order
to determine unique representatives for the isomorphism classes of smooth plane quartics
over finite fields. We also refer to the second author's thesis \cite[Ch. 2]{Loth} for such parametrization of smooth plane quartic curves over number fields.

\begin{thm}[Representative families]\label{complete} The following table shows representative families over $k$ for each stratum of smooth $\overline{k}$-plane curves of genus $6$, with non-trivial automorphism group of order $\neq5$. For $\widetilde{\mathcal{M}_6^{Pl}}(\varrho(\Z/5\Z))$ a geometrically complete family is shown.
\scriptsize
\begin{center}
\begin{longtable}{|c|c|c|c|}
  \hline
    Case &$G$ & $F_{\rho(G)}(X;Y;Z)$ & \text{Parameters restrictions} \\\hline\hline
 1& $\operatorname{GAP}(150,5)$    & $X^5+Y^5+Z^5$ & - \\
  2&$\operatorname{GAP}(39,1)$     & $X^4Y+Y^4Z+Z^4X$  & -\\
  3&$\operatorname{GAP}(30,1)$     & $X^5+Y^4Z+YZ^4$&-\\
4&$\Z/{20}\Z$   & $X^5+Y^5+XZ^4$   & -\\
  5&$\Z/{16}\Z$     & $X^5+Y^4Z+XZ^4$  & - \\\hline
6&$\Z/{10}\Z$       & $\mathbf{X^5+Y^5+ aXZ^4+X^3Z^2}$&$a\neq 0,1/4$
\\\hline
7&$D_{10}$          & $\mathbf{X^5+a(Y^5+Z^5)+XY^2Z^2+bX^3YZ},$ &$a\neq 0,\,b\neq 1$
\\
&          & $\mathbf{X^5+c(Y^5+Z^5)+X^3YZ}$& $a^3\neq-3^35^{-5}$\\\hline
8&$\Z/8\Z$          & $\mathbf{X^5+Y^4Z+a XZ^4+X^3Z^2}$&$a\neq 0,1/4$ 
\\\hline
 9&$S_3$            &  $\mathbf{a^{3}X^5+Y^4Z+YZ^4+a^{2}X^3YZ+abX^2\big(Z^3+Y^3\big)+cXY^2Z^2},$ & n.s
 \\
  &                 &  $\mathbf{d^{2}X^5+Y^4Z+YZ^4+d X^2\big(Z^3+Y^3\big)+eXY^2Z^2},$ & n.s 
  \\
  &            &  $\mathbf{f^{4}X^5+Y^4Z+YZ^4+fXY^2Z^2}$ & $f\neq0,-\frac{3125}{16}$\\\hline
10&$\Z/5\Z$          & $\mathbf{Z^5+XY(X+Y)(X+aY)(X+bY)}$& $ab(a-1)(b-1)(a-b)\neq0,\,n.b$\\\hline
11&$\Z/4\Z$          & $\mathbf{X^5+X^3Z^2+Y^2Z^3+aX^2Y^2Z+X(bY^4+cZ^4)}$, & $bc\neq 0$, $c\neq-\frac{7}{20}$ 
\\
&         & $\mathbf{X^5+X^2Y^2Z+X(dY^4+eZ^4)+Y^2Z^3}$,& $de\neq 0$ \\
&         & $\mathbf{X^5+f(Y^2Z^3+X(Y^4+Z^4))}$ & $f^3\neq-(\frac{3}{4})^3,0$ \\\hline

12 &$\Z/4\Z$          & $\mathbf{X^5+c(X^3Y^2+aX^2Y^3+bcXY^4+c^2Y^5+Z^4Y)}$  &   n.s,\,n.b
\\
   &                  & $\mathbf{X^5+s(XY^4+Y^5+Z^4Y)}$  & \\
   &                  & $\mathbf{X^5+e(X^2Y^3+fXY^4+eY^5+Z^4Y)},$  &  \\
   &                  & $\mathbf{X^5+g(X^2Y^3+XY^4+Z^4Y)}$  &   \\\hline
13&$\Z/3\Z$         & $\mathbf{a^{3}X^5+Y^4Z+YZ^4+a^{2}X^3YZ+aX^2\big(bY^3+cZ^3\big)+dXY^2Z^2},$  &
\\
   &         & $\mathbf{e^2X^5+Y^4Z+YZ^4+X^2\big(eY^3+fZ^3\big)+gXY^2Z^2,}$  & n.s,\,$e\neq f$ \\
   &         & $\mathbf{h^2X^5+Y^4Z+YZ^4+hX^2Z^3+sXY^2Z^2,}$  & n.s \\
   &         & $\mathbf{t^2X^5+Y^4Z+YZ^4+tX^2Z^3}$  & $t\neq0,\,\frac{3125}{1024}$ \\\hline
  14&$\Z/2\Z$         & $\mathbf{Z^4Y+Z^2(X^3+XY^2+aY^3)+L_{5,Z}}$   &  n.s,\,n.b \\\hline
 \caption{Complete families over $k$}\label{table:Complete}
\end{longtable}
\end{center}
\normalsize
\end{thm}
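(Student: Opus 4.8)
The plan is to obtain each representative family by quotienting the corresponding geometrically complete family of Theorem \ref{autoGps} by its residual isomorphisms. By Theorem \ref{genorm} any isomorphism between two members of a fixed family is a projective transformation in the normalizer $N_{\varrho(G)}(\overline{k})$, so everything reduces to the induced action of $N_{\varrho(G)}(\overline{k})$ on the coefficient space $(\beta_{i,j})$ of the model and to producing a slice for it that is, crucially, rational over the field of moduli. The five rigid strata (cases 1--5) are $0$-dimensional, consist of a single curve, and are representative with no parameters, which is why they are excluded from Theorem \ref{genorm}. For the rest the normalizer is either a torus of diagonal rescalings extended by a finite group of coordinate permutations and roots of unity (the cases $\Z/10\Z$, $\Z/8\Z$, $S_3$, $D_{10}$, $\Z/3\Z$, and the $\Z/4\Z$ with normalizer $\langle D(\overline{k}),[Z:Y:X]\rangle$), or a copy of $\operatorname{GL}_{2,Z}(\overline{k})$ (the cases $\Z/2\Z$, the other $\Z/4\Z$, and $\Z/5\Z$).

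For the torus strata I would proceed by the reparametrization device of \cite{LeRiRo}. Recording how a diagonal rescaling multiplies each monomial assigns a weight to every $\beta_{i,j}$, so the torus-invariant weighted ratios are genuine isomorphism invariants and lie in the field of moduli. Normalizing a leading coefficient to $1$ is forbidden because it requires extracting a root of such an invariant, which is in general irrational and leaves a $\mu_n$-ambiguity; instead I would pin down the residual scaling implicitly, writing the normal form with coefficients that are fixed monomials in one invariant. This is exactly the mechanism behind entries such as $f^{4}X^5+Y^4Z+YZ^4+fXY^2Z^2$ for $S_3$: here $f$ is the $XY^2Z^2$-invariant and the scaling is forced by $s^5=f^4$, so the parametrization is at once bijective (hence geometrically representative) and visibly defined over the field of moduli, and Remark \ref{defmod} upgrades this to representativeness over $k$. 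The finite part of the normalizer is divided out afterwards; because the torus normalization can only be performed when a prescribed coefficient is nonzero, it also forces the several sub-families recorded in the table, according to which $\beta_{i,j}$ vanish. Finally the parameter restrictions are read off from two demands: smoothness and geometric irreducibility (abbreviated ``n.s.''), and the absence of a larger automorphism group (``n.b.'' and the explicit exclusions such as $a\neq0,1/4$ for $\Z/10\Z$ or $f\neq0,-3125/16$ for $S_3$), the latter found by detecting the extra symmetry of the degenerate members.

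For the $\operatorname{GL}_{2,Z}(\overline{k})$ strata the group is no longer a torus but acts as $\operatorname{PGL}_2$ on the variables $(X,Y)$. When the model carries a distinguished low-degree form, namely $L_{1,Z}$ in $Z^4L_{1,Z}+\cdots$ for $\Z/2\Z$ and for the relevant $\Z/4\Z$, I would first use $\operatorname{GL}_{2,Z}(\overline{k})$ to normalize $L_{1,Z}$ to $Y$; this breaks $\operatorname{GL}_2$ down to its solvable stabilizer of the line $Y=0$, for which a rational slice exists, and the remaining forms are then put in normal form by the residual triangular group, giving the families listed. The exceptional case is $\Z/5\Z$, whose model $Z^5+L_{5,Z}$ offers no lower-degree form: the full $\operatorname{PGL}_2$ acts on the binary quintic $L_{5,Z}$ with no distinguished sub-structure. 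Sending three of its five roots to a standard triple of $\mathbb{P}^1$ does yield the geometrically complete family $Z^5+XY(X+Y)(X+aY)(X+bY)$ with the generic restriction $ab(a-1)(b-1)(a-b)\neq0$, but the quotient of binary quintics by $\operatorname{GL}_2$ has no rational section, the monomial-in-invariants device fails, and only geometric completeness can be asserted here, as the statement of Theorem \ref{complete} reflects.

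The step I expect to be the main obstacle is the descent to the field of moduli, i.e. ensuring the chosen slice is rational rather than merely defined over $\overline{k}$. For the torus strata this is handled by the monomial-in-invariants trick, but it demands careful weight bookkeeping in every family and a case-by-case check that the residual finite symmetries generated by $[Z:Y:X]$, $\tilde{S_3}$, $G_{03}$ or $G_{05}$ have been fully absorbed; for the $\operatorname{GL}_{2,Z}(\overline{k})$ strata with a linear form it hinges on the reduction to a solvable group; and for $\Z/5\Z$ it cannot be achieved by these elementary means, which is the structural reason that stratum is singled out and postponed to the L\"uroth argument.
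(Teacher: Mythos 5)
Your proposal is correct and takes essentially the same route as the paper: the zero-dimensional strata are single points, the remaining strata with $G\ncong\Z/5\Z$ are treated by combining the normalizers of Theorem \ref{genorm} with the rescaling/invariant normalization technique of \cite{LeRiRo} so that any normalizer element preserving a family fixes each of its equations (geometric representativity, upgraded to representativity over $k$ by Remark \ref{defmod}), and geometric completeness of the $\Z/5\Z$ family is obtained by sending three roots of $L_{5,Z}$ to a standard triple, exactly as in Lemma \ref{whygeom}. The only caveat is your unjustified aside that the quotient of binary quintics ``has no rational section'' so that ``only geometric completeness can be asserted'': this should be read strictly as a limitation of the elementary normalization method, since the paper goes on to prove, via L\"uroth's theorem in dimension $2$, Weil descent and Hilbert's Theorem 90, that a representative family for the $\Z/5\Z$ stratum does exist (it is just not written down explicitly).
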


\vspace{-1.2cm}

The families that are modified respect to the ones in Table \ref{table:FullAuto.} are highlighted. The automorphism groups remain the same one than in Table \ref{table:FullAuto.}.

The parameters restrictions come from avoiding singular equations and larger automorphism groups. We use the abbreviations ``n.s" and ``n.b" for non-singularity and no bigger automorphism group, when it is tedious to write down the restrictions.

\begin{proof}(Cases with $G\ncong\mathbb{Z}/5\mathbb{Z}$)
Clearly, the zero dimensional strata families are representative over $k$, since each represents a single point in the (coarse) moduli space $\mathcal{M}_6$. For the rest of cases, except for the case with $G\simeq\mathbb{Z}/5\mathbb{Z}$, we will use the same techniques used in \cite{LeRiRo} and \cite{Loth}. We give a detailed example with the case $G\simeq\operatorname{D}_{10}$.

We start with the more general family than in Theorem \ref{autoGps}: $aX^5+b(Y^5+Z^5)+cX^3YZ+dXY^2Z^2=0$. We know $a,b\neq0$ to avoid getting the singular points $(1:0:0)$ and $(0:1:0)$. So, after rescaling and renaming variables, we obtain the geometrically complete family: $X^5+Y^5+Z^5+aX^3YZ+bXY^2Z^2=0$.

If $b=0$, then $a\neq0$ to avoid having a bigger automorphism group, so again, after rescaling and renaming variables, we can work with the family: $X^5+c(Y^5+Z^5)+X^3YZ=0$. Now, it is easy to check that all the matrices in $\operatorname{N}_{\rho(\operatorname{D}_{10})}$  (see Theorem \ref{genorm}) carrying equations in this family into equations again in the family, leave the equation invariant. In other words, a curve with parameter $a$ in this family is only isomorphic to itself, which implies that the component in the family is representative over $k$.

If $b\neq0$, we work with the family $X^5+a(Y^5+Z^5)+bX^3YZ+XY^2Z^2=0$. Again any matrix in   $\operatorname{N}_{\rho(\operatorname{D}_{10})}$ leaving the family invariant, fixes each equation.

So, the family given by these two components is representative over $k$ for the stratum with $\operatorname{D}_{10}$.
\end{proof}

Before proving the last part if the Theorem, we need some previous results.

\begin{lem}\label{whygeom}
The family $\mathcal{C}_{(a,b)}:\,Z^5+XY(X+Y)(X+aY)(X+bY)=0$ is a geometrically complete family over $k$ for the stratum of smooth $\overline{k}$-plane curves of genus $6$, with automorphism group isomorphic to $\Z/5\Z$. In particular, the associated scheme has dimension $2$.
\end{lem}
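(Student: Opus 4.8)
The statement to prove is Lemma~\ref{whygeom}: the family
$\mathcal{C}_{(a,b)}:\,Z^5+XY(X+Y)(X+aY)(X+bY)=0$
is a geometrically complete family over $k$ for the stratum
$\widetilde{\mathcal{M}_6^{Pl}}(\varrho(\Z/5\Z))$, and the associated
scheme has dimension $2$. The plan is to compare this family with the
geometrically complete family of Theorem~\ref{autoGps}, namely
$Z^5+L_{5,Z}(X,Y)=0$, where $L_{5,Z}$ is an arbitrary binary quintic
form in $X,Y$. I would argue in two directions: first that every
smooth member of $Z^5+L_{5,Z}=0$ is $\overline{k}$-isomorphic to some
$\mathcal{C}_{(a,b)}$ (geometric completeness), and second that
$\mathcal{C}_{(a,b)}$ genuinely lies in the stated stratum, i.e. its
members are smooth curves of genus $6$ with automorphism group exactly
$\Z/5\Z$ for generic $(a,b)$, so that no member escapes the stratum.

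\textbf{Step 1: reduce the general form to a factored binary quintic.}
Starting from $Z^5+L_{5,Z}(X,Y)=0$, I first observe that
$\varrho(\Z/5\Z)=\langle[X:Y:\zeta_5 Z]\rangle$ acts by scaling $Z$, so
isomorphisms preserving this cyclic action are governed by the
normalizer $\operatorname{N}_{\varrho(\Z/5\Z)}(\overline{k})=\operatorname{GL}_{2,Z}(\overline{k})$
computed in Theorem~\ref{genorm}; concretely these are the changes of
coordinates acting as $\operatorname{GL}_2$ on $(X,Y)$ together with a rescaling of
$Z$. For a smooth curve the binary quintic $L_{5,Z}$ must have five
distinct roots in $\mathbb{P}^1$ (a repeated root forces a singular
point on $Z=0$), so $L_{5,Z}$ factors as a product of five distinct
linear forms. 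The $\operatorname{GL}_2$-action on $(X,Y)$ is exactly the action of
$\operatorname{PGL}_2(\overline{k})$ on the five points of $\mathbb{P}^1$ cut out by
these linear factors (the extra scaling of $Z$ absorbs the leading
coefficient after a fifth-root rescaling). Hence, using the triple
transitivity of $\operatorname{PGL}_2$, I can send three of the five roots to
$0,\infty,-1$, i.e. to the linear forms $X$, $Y$, $X+Y$, leaving the
two remaining roots as free parameters $-a,-b$; this puts the equation
in the form $Z^5+XY(X+Y)(X+aY)(X+bY)=0$, which is $\mathcal{C}_{(a,b)}$.
This establishes geometric completeness: every smooth member of the
Theorem~\ref{autoGps} family is isomorphic over $\overline{k}$ to some
$\mathcal{C}_{(a,b)}$.

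\textbf{Step 2: smoothness, genus and membership in the stratum.}
I would then check that $\mathcal{C}_{(a,b)}$ defines a smooth plane
quintic precisely when the five roots $0,\infty,-1,-a,-b$ are
distinct, i.e. when $ab(a-1)(b-1)(a-b)\neq0$; a short Jacobian-criterion
computation shows the only candidate singularities lie on $Z=0$ and are
ruled out by distinctness of roots. Smoothness of a plane quintic gives
genus $\tfrac12(5-1)(5-2)=6$, and the displayed automorphism
$[X:Y:\zeta_5 Z]$ shows $\Z/5\Z\hookrightarrow\operatorname{Aut}(\mathcal{C}_{(a,b)})$,
so every smooth member lies in $\mathcal{M}_6^{Pl}(\varrho(\Z/5\Z))$. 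For
the family to be geometrically complete \emph{for the stratum}
$\widetilde{\mathcal{M}_6^{Pl}}(\varrho(\Z/5\Z))$ I must know that a generic
member has automorphism group exactly $\Z/5\Z$ (this is the content of
the ``n.b'' condition in Table~\ref{table:Complete}); the larger
automorphism groups correspond to special symmetric configurations of
the five points, which form a proper closed subset of the parameter
space, so they do not affect completeness. Finally, the dimension count:
the parameters $(a,b)$ vary in a $2$-dimensional space, and since
Theorem~\ref{autoGps} records that the open stratum is cut from a
$12$-dimensional $\mathcal{M}_6^{Pl}$ by the $\Z/5\Z$-symmetry, the
count of residual moduli (five points on $\mathbb{P}^1$ modulo $\operatorname{PGL}_2$,
which is $5-3=2$) matches, giving $\dim\verb"S"=2$.

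\textbf{Main obstacle.}
The genuinely delicate point is not the reduction in Step~1, which is
the classical normalization of five points on $\mathbb{P}^1$, but the
precise control of \emph{which} parameters $(a,b)$ give a strictly
larger automorphism group, and correspondingly understanding why this
family is only \emph{complete} and not yet \emph{representative}. The
residual ambiguity is exactly the finite nonabelian symmetry group of
the ordered-to-unordered passage: the five chosen roots can be permuted
by the subgroup of $\operatorname{PGL}_2$ fixing the normalized triple
$\{0,\infty,-1\}$ as a set (an $S_3$-action on cross-ratio-type
coordinates), which identifies finitely many pairs $(a,b)$. This
finite-to-one redundancy is precisely why $\mathcal{C}_{(a,b)}$ is
geometrically complete but fails to be representative, and it is why the
authors later need the two-dimensional L\"uroth argument to produce an
actual representative family; in this lemma it suffices to record
completeness and the dimension, deferring the bijectivity question.
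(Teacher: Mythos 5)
Your proposal follows essentially the same route as the paper: reduce the geometrically complete family $Z^5+L_{5,Z}=0$ of Theorem \ref{autoGps} to the factored form $\mathcal{C}_{(a,b)}$, using that smoothness forces the five roots of $L_{5,Z}$ to be pairwise distinct and that $\operatorname{PGL}_2$ (acting through the normalizer $\operatorname{GL}_{2,Z}(\overline{k})$ of Theorem \ref{genorm}) can send three of them to the normalized positions; then deduce the dimension from the fact that the resulting two-parameter map onto the stratum is finite-to-one. Your Step 1 matches the paper's completeness argument, and your dimension conclusion is correct.

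There is, however, one concrete error worth flagging, even though it does not sink this lemma. You assert that the residual redundancy in $(a,b)$ comes only from ``the subgroup of $\operatorname{PGL}_2$ fixing the normalized triple $\{0,\infty,-1\}$ as a set (an $S_3$-action)''. That is an undercount: two pairs give isomorphic curves whenever \emph{some} M\"obius transformation carries the full five-point set of roots onto the other five-point set, and such a transformation is free to move the roots corresponding to $a$ or $b$ into the normalized triple. The paper computes this group of identifications precisely: it is isomorphic to $S_5$, generated by $\tau_1(a,b)=(a,\frac{a(b-1)}{b-a})$, $\tau_2(a,b)=(\frac{1}{b},\frac{a}{b})$ and the swap $\tau_3(a,b)=(b,a)$, and the fibers of $f_{\verb"C"}$ have cardinality $120$, far more than your $S_3$ would allow. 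For the present lemma only finiteness matters, and finiteness holds either way (a M\"obius transformation is determined by the images of three points, so at most $5\cdot 4\cdot 3$ transformations can match two given five-point sets); replacing your $S_3$ claim by this bound repairs the proof. But the error would propagate: the subsequent Claim constructs the representative family by Galois descent with respect to exactly this group, so identifying it as $S_5$ acting on $\overline{k}(a,b)$ (equivalently $A_5$ on $\overline{k}(a+b,ab)$), rather than $S_3$, is essential there.
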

\begin{proof}
The family $Z^5+L_{5,Z}=0$ is a geometrically complete family over $k$ for the stratum, by Theorem \ref{autoGps}. Moreover, $L_{5,Z}$ should factored  in $\overline{k}[X,Y]$ into pairwise distinct linear factors, otherwise, it will be singular. Now, up to $\overline{k}$-isomorphism, we change $X$ and $Y$, separately, to make one of the factors equals to $X=0$ and another to $Y=0$. Second, re-scale $X,\,Y$ and $Z$ simultaneously to get the factor $(X+Y)$ in the factorization of $L_{5,Z}$. Now, we can write the family as $\mathcal{C}_{(a,b)}:\,Z^5+XY(X+Y)(X+aY)(X+bY)=0$. This family is geometrically complete over $k$ for $\widetilde{\mathcal{M}_6^{Pl}}(\varrho(\Z/5\Z))$, and finite (we justify this next), so the dimension is $2$.

Isomorphisms from the curve $\mathcal{C}_{(a,b)}$ to another curve in this family come from transformations
$$
\begin{pmatrix}\alpha&\beta\\ \gamma & \delta\end{pmatrix}:\,t\mapsto\frac{\alpha t+\beta}{\gamma t+ \delta},
$$
sending the set $\{0,1,\infty,a,b\}$ to a set $\{0,1,\infty,c,d\}$. The set $\mathcal{T}$ of such transformations is a group and it is isomorphic to $S_5$. Moreover, it is generated by
$$
\tau_1(a,b)=(a,\frac{a(b-1)}{b-a}),\,\,\,\,\,\tau_2(a,b)=(\frac{1}{b},\frac{a}{b}),\,\,\,\,\,\tau_3(a,b)=(b,a).
$$
The latest does not properly define a transformation of the curve in the family since switching the parameters $a,b$ does not change the equation. The first two satisfy the relations $\tau_1^2=\tau_{2}^{3}=(\tau_1\tau_2)^5=1$ generating a group isomorphic to $\operatorname{A}_5$.

The family defined in this way $\verb"C"\rightarrow\verb"S"$  is finite and the fibers of $f_{\verb"C"}: \verb"S"\rightarrow S$ have cardinality \footnote{Another way of checking the cardinality is starting with a generic curve $Z^5+\prod_{i=0}^{5}(X+\alpha_i Y)$ and counting the $(5\cdot4\cdot3)\cdot2$ ways of choosing the $3$ roots going to $\infty,0,1$ and getting the parameters $a$ and $b$} $120$.

\end{proof}

The family $\mathcal{C}_{(a,b)}$ is defined over $\overline{k}(a,b)$. We are ideally looking for a family (with two parameters since we know the dimension is two) defined over $L=\overline{k}(a,b)^{\mathcal{T}}$. Hence, we look for the Galois descent from $\overline{k}(a,b)$ to $L$.  This the idea behind the ad-hoc method used in \cite{LeRiRo}, see the proof of Theorem $3.3$.

The following asserts that the analogue of L\"{u}roth's theorem \cite[Chapter IV, 2.5.5]{hart} holds in dimension $2$.
\begin{thm}\label{Luroth}\cite[Chapter V, Theorem 6.2 and Remark 6.2.1]{hart}
Let $L/K$ be a subfield extension of a purely transcendental extension $K(a,b)/K$, where $K$ is an algebraically closed field. If $K(a,b)$ is a finite separable extension of $L$, then $L$ is also a pure transcendental extension of $K$.
\end{thm}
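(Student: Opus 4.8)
The plan is to translate this purely algebraic statement into the birational geometry of surfaces and then invoke Castelnuovo's rationality criterion. First I would choose a smooth projective surface $S$ over $K$ with function field $K(S)\cong L$; such a model exists because resolution of singularities is available in dimension two. The inclusion $L\hookrightarrow K(a,b)=K(\mathbb{P}^2)$ then corresponds to a dominant rational map $\varphi\colon\mathbb{P}^2\dashrightarrow S$, which is generically finite of degree $[K(a,b):L]$ and, by hypothesis, separable. In other words $S$ is \emph{separably unirational}, and the statement to prove becomes: a separably unirational surface is rational, i.e. $L=K(S)$ is purely transcendental over $K$.

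Next I would promote the rational map to a morphism. By elimination of indeterminacy for surfaces, after finitely many blow-ups of points $\pi\colon\widetilde{\mathbb{P}^2}\to\mathbb{P}^2$ the composite lifts to a separable dominant morphism $\widetilde{\varphi}\colon\widetilde{\mathbb{P}^2}\to S$. The heart of the argument is then the vanishing of the relevant birational invariants of $S$. Because $\widetilde{\varphi}$ is dominant and separable, pullback of pluricanonical forms gives injections $H^0(S,mK_S)\hookrightarrow H^0(\widetilde{\mathbb{P}^2},mK_{\widetilde{\mathbb{P}^2}})$ for every $m\geq1$, together with an injection on $H^1(\mathcal{O})$. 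Since $\widetilde{\mathbb{P}^2}$ is rational, all of its plurigenera vanish and its irregularity is zero; hence the bigenus $P_2(S)=0$ and the irregularity $q(S)=h^1(S,\mathcal{O}_S)=0$.

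Finally I would apply Castelnuovo's criterion: a smooth projective surface with $q=P_2=0$ is rational. This holds over any algebraically closed field, by Zariski's extension of the classical theorem to positive characteristic, which covers the case $p>13$ relevant here. It follows that $S$ is birational to $\mathbb{P}^2$, so $L=K(S)\cong K(s,t)$ is a purely transcendental extension of $K$, as required.

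I expect the main obstacle to be Castelnuovo's criterion itself, which is the genuinely deep input and would have to be cited rather than reproved. I would also emphasize that the separability hypothesis is essential and cannot be dropped: in positive characteristic there exist unirational surfaces that are not rational, and separability is precisely what guarantees that the pullback on pluricanonical forms is injective, so that $P_2(S)$ vanishes and the criterion applies.
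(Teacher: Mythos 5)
The paper never proves this statement at all: it is quoted verbatim from Hartshorne (Chapter V, Theorem 6.2 together with Remark 6.2.1), and the content of that citation is exactly your argument --- translate the field-theoretic claim into ``a separably unirational smooth projective surface is rational'', kill the birational invariants by separable pullback, and invoke Castelnuovo's rationality criterion (Zariski's extension in characteristic $p$) as the deep, cited input. So your reconstruction has the same architecture as the source the paper relies on, and in characteristic $0$ every step you wrote is sound, including your correct emphasis that separability is what makes the plurigenera vanish.

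There is one step, however, that is not justified as you stated it in positive characteristic (a case the theorem asserts, and the one the paper needs for $p>13$): the ``injection on $H^1(\mathcal{O})$''. Separable dominant pullback injectivity is a statement about \emph{differential forms}: it gives $P_m(S)=0$ for all $m$ and $h^0(S,\Omega^1_S)=0$. But Castelnuovo--Zariski requires $q(S)=h^1(S,\mathcal{O}_S)=0$, and in characteristic $p$ you cannot use Hodge symmetry to pass from $h^0(\Omega^1_S)$ to $h^1(\mathcal{O}_S)$, nor does a trace argument produce the claimed injection $H^1(S,\mathcal{O}_S)\hookrightarrow H^1(\widetilde{\mathbb{P}^2},\mathcal{O})$ when $p$ divides $\deg\widetilde{\varphi}$. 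The standard repair is indirect: since $p_g(S)=h^2(S,\mathcal{O}_S)=0$ (this \emph{does} follow from separable pullback of $2$-forms), the obstruction space for $\operatorname{Pic}^0_S$ vanishes, so $\operatorname{Pic}^0_S$ is smooth, hence reduced, and therefore $q(S)=\dim\operatorname{Alb}(S)$; and $\operatorname{Alb}(S)=0$ because the composite $\widetilde{\mathbb{P}^2}\to S\to\operatorname{Alb}(S)$ is constant (a rational surface admits no nonconstant map to an abelian variety) while $\widetilde{\varphi}$ is dominant, so the image of $S$ in $\operatorname{Alb}(S)$ is a point. With that substitution your proof is complete in all characteristics and coincides with the cited one. (In the paper's actual application the extension degree is $120=|S_5|$ and $p=0$ or $p>13$, so $p\nmid\deg\widetilde{\varphi}$ and a trace argument would rescue your original step there; but the theorem as stated requires the general fix.)
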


We can now prove:

\textbf{Claim.} There exists a representative family over $k$, for the stratum $\widetilde{\mathcal{M}_6^{Pl}}(\varrho(\Z/5\Z))$ of smooth $\overline{k}$-plane curves of genus $6$, and the proof of Theorem \ref{complete} is finished.

\begin{proof}
Consider the family $\mathcal{C}_{(a,b)}$, which is geometrically complete over $k$ (see Lemma \ref{whygeom}). It is defined over $\overline{k}(a,b)$, or to be more precise, it is already defined over the subextension $\overline{k}(c,d)$ where $c=a+b$ and $d=ab$. We want to descend it to the invariant subfield $L$ under the action of the symmetric group $\operatorname{S}_5$ on $\overline{k}(a,b)$, or equivalently of $\operatorname{A}_5$ on $\overline{k}(c,d)$. In this way, we will get a representative family over $k$: if such a family exists, it is purely transcendental of dimension $2$ by Theorem \ref{Luroth}, and hence given by $2$ parameters.

On the other hand, Weil's cocycle criterion \cite{We} states that if there exists a family of isomorphisms
$$\{\phi_{\sigma}:\,^{\sigma}\mathcal{C}_{(a,b)}\rightarrow\mathcal{C}_{(a,b)}\}_{\sigma\in Gal(\overline{L}/L)},$$
satisfying the cocycle condition $\phi_{\sigma_1\sigma_2}=\phi_{\sigma_1}\,^{\sigma_1}\phi_{\sigma_2}$, then there exist a descend of the curve  $\mathcal{C}_{(a,b)}$ over $L$.
That is an isomorphism $\phi:\,\mathcal{C}\rightarrow\mathcal{C}_{(a,b)}$ of smooth curves, satisfying $\phi\,\circ\,^{\sigma}\phi^{-1}=\phi_\sigma$ and where $\mathcal{C}$ is defined over $L=\overline{k}(\alpha,\beta)^{\mathcal{T}}$.

A priori, the curve $\mathcal{C}$ does not need to be a plane curve, even if the isomorphisms $\phi_\sigma$ are (projective) matrices and $\mathcal{C}_{(a,b)}$ are plane curves, see Proposition $2.19$ in \cite{BaBaEl1}. But in this case we can conclude that the isomorphism $\phi$ is also defined by matrix since all matrices $\phi_\sigma$ that we will take can be seeing not only as matrices in $\operatorname{PGL}_3(\overline{k}(a,b))$, but as matrices in $\operatorname{GL}_2(\overline{k}(a,b))$, since all of them will have the shape
$$
\begin{pmatrix}
* & * & 0\\
* & * & 0\\
0 & 0 & 1
\end{pmatrix},
$$
and then Hilbert's Theorem 90 implies the existence of a matrix $\phi$ satisfying $\phi\,\circ\,^{\sigma}\phi^{-1}=\phi_\sigma$.
Another proof of this fact is \cite[Theorem 2.6]{BaBaEl1}, that says that a $\overline{K}$-plane curve defined over $K$ has a $K$-plane model when the degree is coprime to $3$, so even if $\mathcal{C}$ is not given by $\phi$ by a plane model over $L$, we can find one.

The Galois group $\operatorname{Gal}(\overline{k}(a,b)/L)=\operatorname{S}_5$ and generated by $\tau_1$, $\tau_2$ and $\tau_3$. We define $\phi_{\tau_1}=[\lambda(-X+Y):\lambda(\frac{-1}{a}X+Y):Z]$ where $\lambda=\sqrt[5]{(a-1)^{-2}(a-b)^{-1}a^{3}}$, $\phi_{\tau_2}=[\sqrt[5]{b}X:\frac{\sqrt[5]{b}}{b}Y:Z]$ and $\phi_{\tau_3}=[X:Y:Z]$ and the family $\{\phi_\sigma\}_{\sigma\in\operatorname{Gal}(\overline{k}(a,b)/L)}$ by extending with the cocycle condition. This gives a well-defined family of isomorphisms satisfying the Weil cocycle condition. Hence, the mentioned descend exists.
\end{proof}

Indeed, we can construct a explicit matrix $\phi$ by taking a sufficiently general matrix $M$, and making Hilbert's Theorem $90$ explicit:
$$\phi=\sum_{\tau\in Gal(\overline{k}(a,b)/L)}\,\phi_{\tau}\,^{\tau}M,$$
gives an isomorphism satisfying\footnote{It is a straightforward computation to check that $\phi_{\sigma}\,^{\sigma}\phi=\phi$ and the meaning of a sufficiently general matrix $M$ is that the matrix $\phi$ constructed in that way is invertible.} $\phi_{\sigma}=\phi\,\circ\,^{\sigma}\phi^{-1}$.

\subsection{Detecting representatives in the list}
Given a non-singular degree $5$ plane curve with known non-trivial automorphism group, we can find its representative in the classification in Theorem \ref{complete} by using the remarks at the end of chapter $2$ in the second author's Thesis \cite{Loth}.

\section{Stratification of $\mathcal{M}_6^{Pl}$ by automorphism groups}
The diagram in next page shows how looks like the stratification by automorphism groups of non-singular plane quintic curves. The dimensions are computed by looking at the dimension of the schemes $\verb"S"$ in the representative families in Theorem \ref{complete}.

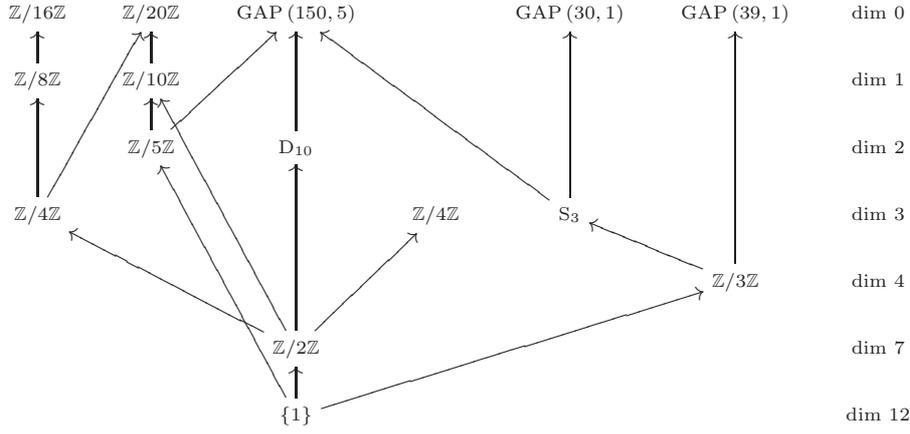
\begin{figure}[!ht]
\scriptsize
$$
\xymatrix@C-=0.5cm@R-=0.4cm{
\mathbb{Z}/16\mathbb{Z}  &  \mathbb{Z}/20\mathbb{Z}  &  \operatorname{\operatorname{GAP}}\left(150,5\right) &  & \operatorname{\operatorname{GAP}}\left(30,1\right) & \operatorname{\operatorname{GAP}}\left(39,1\right) & \operatorname{dim}\, 0  \\
\mathbb{Z}/8\mathbb{Z}  \ar[u] &  \mathbb{Z}/10\mathbb{Z}  \ar[u]    &  &   & & &  \operatorname{dim}\,1  \\
 &  \mathbb{Z}/5\mathbb{Z}  \ar[u]\ar[ruu] & \text{D}_{10} \ar[uu]& & & &  \operatorname{dim}\,2  \\
  \mathbb{Z}/4\mathbb{Z}\ar[uu]\ar[ruuu]  &     & &\mathbb{Z}/4\mathbb{Z}& \text{S}_3\ar[uuu]\ar[lluuu]& &   \operatorname{dim}\,3  \\
  &    && && \mathbb{Z}/3\mathbb{Z} \ar[lu] \ar[uuuu]&   \operatorname{dim}\,4  \\
    &    &  \mathbb{Z}/2\mathbb{Z} \ar[lluu]\ar[luuuu]\ar[uuu]\ar[ruu] & & & &   \operatorname{dim}\,7 \\
        &    &  \{1\} \ar[luuuu]\ar[u]\ar[rrruu] &&  & &   \operatorname{dim}\,12
}
$$
\caption{Stratification by automorphisms group}
\label{diaguay}
\end{figure}
\normalsize

We can see in Figure \ref{diaguay} a phenomenon that does not happen for degree $4$. We define a final stratum by automorphism groups to be a stratum that does not properly contain any other stratum. There is a final stratum in $\mathcal{M}_6^{Pl}$ of dimension greater than zero. This may sound odd since we could expect that by adding conditions in the parameters we would get bigger automorphisms groups. However, we will see that this is a normal situation for degree higher than $4$.

\subsection{A canonical justification}
The family for the stratum with $\rho(G)=\langle[X:iY:-Z]\rangle$ has generic component $\mathcal{C}_{a,b,c}:\,X^5+X^3Z^2+Y^2Z^3+aX^2Y^2Z+bXY^4+cXZ^4=0$. This stratum has dimension $3$ and it is a final stratum: no restriction of the parameters give a larger automorphism group. Let us work for a while with the renormalized family $\mathcal{S}_{A,B,C}:\,X^5+AX^3Z^2+BX^2Y^2Z+CXY^4+XZ^4+Y^2Z^3=0$. Now, it is easy to see that making $A=B=C=0$ we get a larger automorphism group. For instance, we get the new automorphism $[X:\zeta_8 Y:i Z]$. However, the plane curve defined by this equation is singular.

We will see an explanation of this final stratum not having dimension zero. We will regard the family $\mathcal{S}_{A,B,C}$ in $\mathcal{M}_{6}^{Pl}$ inside a family in $\mathcal{M}_6$ that is not final. When we add restrictions here, we get extra symmetries and the curve is not plane anymore.

Let us start by computing the family $\mathcal{K}_{A,B,C}$ of canonical models of $\mathcal{S_{A,B,C}}$. We define the functions $x=X/Z$ and $y=Y/Z$.
\begin{eqnarray*}
\text{div}(x)&=&2(0:0:1)+2(0:1:0)-\sum_{s=1}^{4} (i^{s}\sqrt[4]{-C}:1:0)=2P+2Q-\sum_{i=1}^{4}R_i\\
\text{div}(y)&=&(0:0:1)-(0:1:0)-\sum_{s=1}^{4} (i^{s}\sqrt[4]{-C}:1:0)+(1:0:\pm\sqrt{\tfrac{-A\pm\sqrt{A^2-4}}{2}})=P-Q-\sum_{i=1}^{4}R_i+\sum_{i=1}^{4}T_i.
\end{eqnarray*}

In order to compute $\text{div}(dx)$, we work with the affine form $F(x,y,1)=x^5+Ax^3+Bx^2y^2+Cxy^4+x+y^2$. The differential $dx$ is an uniformizer for all points except for $P$ and the $T_i$'s because the tangent space to the curve at these points have equation $x-\alpha$ for some $\alpha\in\overline{k}(A,B,C)$ (we have used $d(x-\alpha) = dx)$. Then, for those points, we have to work with the expression
$$
dx=-\frac{y(2Bx^2+4Cxy^2+2)}{5x^4+3Ax^2+2Bxy^2+Cy^4+1}dy.
$$
We finally get
$$
\text{div}(dx)=P+Q+\sum_{i=1}^{4}R_i+\sum_{i=1}^{4}T_i,
$$
and a basis of regular differentials is given by
\begin{eqnarray*}
\omega_0&=&\frac{dx}{y},\,\omega_1=\frac{xdx}{y},\,\omega_2=dx,\\
\omega_3&=&\frac{x^2dx}{y},\,\omega_4=xdx,\,\omega_5=ydx.
\end{eqnarray*}

\begin{prop}\label{generators} The ideal of the canonical model of $\mathcal{S}_{A,B,C}$ in $\mathbb{P}^5[\omega_0,\omega_1,\omega_2,\omega_3,\omega_4,\omega_5]$ is generated by the polynomials
$$\omega_0\omega_3=\omega_{1}^{2},\,\omega_0\omega_4=\omega_{1}\omega_2,\,\omega_0\omega_5=\omega_{2}^{2},\,\omega_4^2=\omega_3\omega_5,\,{\omega_1\omega_5=\omega_2\omega_4,\,\omega_1\omega_4=\omega_2\omega_3},$$
$$\omega_1\omega_{3}^2+A\omega_{1}^{3}+B\omega_0\omega_{4}^{2}+C\omega_2\omega_4\omega_5+\omega_{0}^2\omega_1+\omega_{0}^2\omega_5=0,$$
$$\omega_{3}^3+A\omega_0\omega_{3}^{2}+B\omega_1\omega_{3}\omega_{5}+C\omega_3\omega_{5}^{2}+\omega_{0}^2\omega_3+\omega_{0}\omega_1\omega_5=0,$$
$$\omega_4\omega_{3}^2+A\omega_{0}\omega_{3}\omega_{4}+B\omega_2\omega_{3}\omega_5+C\omega_4\omega_{5}^{2}+\omega_{0}^2\omega_4+\omega_{0}\omega_2\omega_5=0.$$
We denote it by $\mathcal{K}_{A,B,C}$.
\end{prop}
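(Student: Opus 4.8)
The plan is to recognize the canonical model as a curve sitting on a Veronese surface, and to split the nine generators into the six equations cutting out that surface (the quadrics) and three cubics coming from the plane quintic equation itself.

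First I would rewrite the canonical map in projective coordinates. Clearing $y$ from the basis $\omega_0,\dots,\omega_5$ computed above gives
$$[\omega_0:\omega_1:\omega_2:\omega_3:\omega_4:\omega_5]=[1:x:y:x^2:xy:y^2]=[Z^2:XZ:YZ:X^2:XY:Y^2],$$
so the canonical embedding is exactly the restriction to $\mathcal{S}_{A,B,C}$ of the $2$-uple (conic) Veronese embedding $\nu_2:\mathbb{P}^2\hookrightarrow\mathbb{P}^5$. This is forced by adjunction: for a smooth plane quintic one has $K_C=(d-3)H=2H$, so the canonical system is cut out by conics, and a plane curve of degree $\ge 4$ is automatically non-hyperelliptic, so the canonical map is an embedding. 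Consequently the canonical model lies on $V=\nu_2(\mathbb{P}^2)$, whose homogeneous ideal is generated by the six $2\times 2$ minors of the symmetric matrix
$$\begin{pmatrix}\omega_3&\omega_4&\omega_1\\ \omega_4&\omega_5&\omega_2\\ \omega_1&\omega_2&\omega_0\end{pmatrix}.$$
Expanding these minors produces precisely the six listed quadrics $\omega_0\omega_3=\omega_1^2$, $\omega_0\omega_4=\omega_1\omega_2$, $\omega_0\omega_5=\omega_2^2$, $\omega_4^2=\omega_3\omega_5$, $\omega_1\omega_5=\omega_2\omega_4$, $\omega_1\omega_4=\omega_2\omega_3$.

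Next I would produce the cubics. Since $\deg F=5$ is odd, $F$ itself cannot be written as a polynomial in the conics $\omega_i$, but the degree-$6$ forms $Z\cdot F$, $X\cdot F$ and $Y\cdot F$ can. Substituting $\omega_0=Z^2,\dots,\omega_5=Y^2$ shows that the three cubic relations in the statement are respectively $Z\cdot F$, $X\cdot F$ and $Y\cdot F$; this is the routine verification I would record but not belabor. Geometrically, under $V\cong\mathbb{P}^2$ these three cubics cut out on $V$ the divisor $\{F=0\}=C$.

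Finally — and this is where the real content lies — I would show that the six quadrics together with the three cubics generate the \emph{whole} canonical ideal $I_C$, not merely cut $C$ out set-theoretically. Here I would use that both $V$ and the non-hyperelliptic canonical curve $C$ are projectively normal. From the graded exact sequence relating $I_V\subseteq I_C$ to the ideal of $C$ inside the coordinate ring $S=\bigoplus_n H^0(\mathbb{P}^2,\mathcal{O}(2n))$ of $V\cong\mathbb{P}^2$, that ideal equals $\bigoplus_n H^0(\mathbb{P}^2,\mathcal{O}(2n-5))$ (using $H^1(\mathbb{P}^2,\mathcal{O}(2n-5))=0$); it vanishes for $n\le 2$ and is generated over $S$ by its first nonzero piece $H^0(\mathbb{P}^2,\mathcal{O}(1))=\langle X,Y,Z\rangle$ in degree $n=3$, because $H^0(\mathcal{O}(2))\otimes H^0(\mathcal{O}(m))\to H^0(\mathcal{O}(m+2))$ is surjective for all $m\ge 1$. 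Lifting these degree-$3$ generators to $XF,YF,ZF$ (possible since $R_n\twoheadrightarrow S_n$ by projective normality of $V$) and adjoining the quadric generators of $I_V$ gives $I_C=I_V+(XF,YF,ZF)$. As a consistency check, Riemann--Roch yields $\dim(I_C)_2=21-15=6$ and $\dim(I_C)_3=56-25=31$, while the Veronese contributes $\dim(I_V)_3=56-28=28$ cubics, leaving exactly $31-28=3$ new cubic generators, and one should verify that $XF,YF,ZF$ are independent modulo $(I_V)_3$. The main obstacle is precisely this generation step: the dimension count alone only matches Hilbert functions degreewise, and it is the projective normality of $V$ and $C$ (equivalently the surjectivity and vanishing statements above) that upgrades it into an honest statement about ideal generators.
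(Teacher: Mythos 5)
Your proposal is correct, but it follows a genuinely different route from the paper's own proof. The paper argues by direct computation on the nine equations themselves: it dehomogenizes with respect to $\omega_0$ and checks that on $\omega_0\neq 0$ one recovers the affine quintic $x^5+Ax^3+Bx^2y^2+Cxy^4+x+y^2$ (with $x=\omega_1,y=\omega_2$), that on $\omega_0=0$ one recovers exactly the points at infinity $Q,R_i$, and then verifies smoothness by showing the Jacobian matrix of the nine generators has rank $4$ at every point of the zero locus. Your argument instead identifies the canonical map of a smooth plane quintic with the restriction of the $2$-uple Veronese $\nu_2\colon\mathbb{P}^2\hookrightarrow\mathbb{P}^5$ (via $K_C=2H$ and non-hyperellipticity), recognizes the six quadrics as the $2\times 2$ minors generating $I_V$, identifies the three cubics with $ZF,XF,YF$, and then uses projective normality and the vanishing/surjectivity of the relevant cohomology to prove $I_C=I_V+(XF,YF,ZF)$. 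What your approach buys is precisely the ideal-theoretic content of the statement: the paper's verification only shows that the scheme cut out by the nine polynomials is the (reduced, smooth) canonical image, i.e.\ it identifies the saturation of the ideal they generate, whereas your cohomological argument shows these nine polynomials generate the full homogeneous ideal, which is what the proposition literally asserts. What the paper's approach buys, and yours does not, is the closing remark of its proof that the rank-$4$ Jacobian computation is \emph{independent of the choice of the parameters} $A,B,C$: this is what justifies the subsequent corollary, where one specializes to $A=B=C=0$ (a value at which the plane model $\mathcal{S}_{0,0,0}$ is singular) and still needs $\mathcal{K}_{0,0,0}$ to be a smooth genus $6$ curve. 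Your argument is anchored to $\mathcal{S}_{A,B,C}$ being a smooth plane quintic (over $k(A,B,C)$ this is fine, so the proposition as stated is fully proved), but it gives no information at such degenerate specializations, so if one adopted your proof the paper would still need the Jacobian computation, or a flatness/semicontinuity argument, to support the corollary.
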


\begin{proof}
If $\omega_0\neq 0$, then the des-homogenization of this ideal with respect to $\omega_0$ gives $$\omega_1^5+A\omega_1^3+B\omega_1^2\omega_2^2+C\omega_1\omega_2^4+\omega_1^4+\omega_2^2,$$ and we recover the affine curve $\mathcal{S}_{A,B,C}$ for $Z=1$. If $\omega_0=0$, then $\omega_1=\omega_2=0$, $\omega_3\omega_4=\omega_5^2$, $\omega_3(\omega_3^2+\omega_5^2)=0$ and we recover the points at infinity for $\mathcal{S}_{A,B,C}$: $Q$, $R_i$'s.

To check that it is non-singular, we need to see if the rank of the matrix of partial derivatives of the previous generating functions has rank equal to $\text{dim}(\mathbb{P}^5)-\text{dim}(\mathcal{K}_{A,B,C})=4$  at every point\footnote{here we mean de dimension of $\mathcal{K}_{A,B,C}$ as a variety over $k(A,B,C)$, that is $1$ since it is a curve}, that is, the tangent space has codimension $4$. If $\omega\neq0$, the partial derivatives of the first three equations plus the equation in the second line produce $4$ linearly independent vectors in the tangent space. If $\omega_0=0$, then $\omega_5$ is non-zero, and, the $3$rd, $4$th, and the $6$th equations plus the equation in the last line provide $4$ linearly independent vectors.
Moreover, this is independent of the choice of the parameters $A,B,C$.
\end{proof}

\begin{cor}
If we specialize the parameters to $A=B=C=0$, then we get a smooth curve of genus $6$, whose full automorphism group has order multiple of $8$ and contains
$$\operatorname{diag}(\zeta_{8}^{7}, \zeta_{8}^{5}, \zeta_{8}^{6}, \zeta_{8}^{3}, \zeta_{8}^{4}, \zeta_{8}^{5}).
$$
This curve does not admit a non-singular plane model over $\overline{k}$. So the $11$th stratum of plane curves of genus $6$ in Theorem \ref{complete} that is final as a plane stratum, is indeed living inside a stratum of smooth curves of genus $6$ which is not final.
\end{cor}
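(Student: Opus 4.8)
The plan is to read the statement off Proposition \ref{generators} together with the classification of Theorem \ref{autoGps}, in three steps corresponding to the three assertions.

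First, the smoothness and the genus. Proposition \ref{generators} asserts that the Jacobian of its nine generators has rank $4$ at every point and that this holds independently of $A,B,C$; specialising to $A=B=C=0$ therefore yields a smooth one-dimensional subscheme $\mathcal{K}_{0,0,0}\subset\mathbb{P}^5$. Since $\mathcal{K}_{0,0,0}$ is the fibre over $(0,0,0)$ of the flat family $\mathcal{K}_{A,B,C}$ whose generic member is the canonical model of a smooth genus $6$ curve, its Hilbert polynomial is that of a canonical genus $6$ curve, so $\mathcal{K}_{0,0,0}$ is a smooth curve of genus $6$.

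Second, the automorphism. I would work on the (singular) plane equation $\mathcal{S}_{0,0,0}:X^5+XZ^4+Y^2Z^3=0$ and check by inspection that $\phi=[X:\zeta_8Y:iZ]$ fixes each of its three monomials, so $\phi$ is an automorphism of the function field and hence of the smooth model $\mathcal{K}_{0,0,0}$. On the affine functions it acts by $x\mapsto\zeta_8^{6}x$, $y\mapsto\zeta_8^{7}y$ and $dx\mapsto\zeta_8^{6}dx$; transporting this to the basis $\omega_0,\dots,\omega_5=\tfrac{dx}{y},\tfrac{x\,dx}{y},dx,\tfrac{x^2dx}{y},x\,dx,y\,dx$ of regular differentials computed just before Proposition \ref{generators}, these pick up the scalars $\zeta_8^{7},\zeta_8^{5},\zeta_8^{6},\zeta_8^{3},\zeta_8^{4},\zeta_8^{5}$, that is, exactly the displayed diagonal matrix. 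A direct computation shows $\phi$ has order $8$ in $\operatorname{PGL}_3(\overline{k})$, so $8\mid\#\operatorname{Aut}(\mathcal{K}_{0,0,0})$ by Lagrange.

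Third --- and this is the step I expect to be the main obstacle --- the non-existence of a non-singular plane model. I would argue by contradiction: a smooth genus $6$ curve with a non-singular plane model is a smooth plane quintic, so its automorphism group must occur in Table \ref{table:FullAuto.}, and, being divisible by $8$, must be one of the only two entries of order divisible by $8$, namely $\Z/8\Z$ (case $8$) or $\Z/{16}\Z$ (case $5$), both cyclic of order $\le16$. It therefore suffices to prove that $\operatorname{Aut}(\mathcal{K}_{0,0,0})$ is neither, for instance that it has order $>16$ or is non-cyclic. I would compute this group intrinsically: automorphisms of a canonical curve are exactly the elements of $\operatorname{PGL}_6(\overline{k})$ stabilising the ideal of Proposition \ref{generators} at $A=B=C=0$, which is a finite and explicit linear-algebra problem; exhibiting a single automorphism independent of $\phi$ --- the extra symmetry that surfaces precisely when $A=B=C=0$ and which is what forces the plane model to become singular --- already destroys cyclicity and gives the contradiction. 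Assembling the three steps, $\mathcal{K}_{0,0,0}$ is a smooth genus $6$ curve whose automorphism group strictly contains the $\Z/4\Z$ of the generic member of stratum $11$ but which admits no non-singular plane model, so the $11$th stratum, final inside $\mathcal{M}_6^{Pl}$, lies properly inside the non-plane stratum of $\mathcal{M}_6$ cut out by this larger group, which is not final. I would be most careful with the assumed smoothness of the special fibre: everything hinges on the parameter-independence of the rank estimate in Proposition \ref{generators}, and since $A=B=C=0$ is exactly where the plane model degenerates, that is the one computation I would re-verify by hand before trusting the rest.
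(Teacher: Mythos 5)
Your steps 1 and 2 follow the same route as the paper (smoothness and genus from Proposition \ref{generators}, and the displayed matrix obtained by transporting $[X:\zeta_8Y:iZ]$ to the basis $\omega_0,\dots,\omega_5$), but your step 3 --- the step that carries the actual content of the corollary --- replaces the paper's argument by one with a genuine gap. You reduce everything to showing that $\operatorname{Aut}(\mathcal{K}_{0,0,0})$ is not abstractly cyclic of order $8$ or $16$, and you propose to do this by ``exhibiting a single automorphism independent of $\phi$'' that destroys cyclicity. You never exhibit such an automorphism, and nothing guarantees it exists: the corollary only asserts that $8$ divides the order of $\operatorname{Aut}(\mathcal{K}_{0,0,0})$, which is perfectly compatible with $\operatorname{Aut}(\mathcal{K}_{0,0,0})\cong\Z/8\Z$; if that is the case, your plan cannot distinguish $\mathcal{K}_{0,0,0}$ from the case-$8$ quintics by abstract group theory at all. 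The paper avoids this by using a finer invariant: it computes how the order-$8$ automorphism $[X:\zeta_8Y:-Z]$ of the case-$8$ plane quintics (a family that already contains the $\Z/16\Z$ curve as the member $\beta_{2,0}=0$) acts on their canonical model, obtaining $\operatorname{diag}(\zeta_8^5,\zeta_8^6,\zeta_8,\zeta_8^7,\zeta_8^2,\zeta_8^5)$, and checks that the cyclic subgroup of $\operatorname{PGL}_6(\overline{k})$ it generates is not conjugate to the one generated by $\operatorname{diag}(\zeta_8^7,\zeta_8^5,\zeta_8^6,\zeta_8^3,\zeta_8^4,\zeta_8^5)$: no power of the first matrix has the same multiset of eigenvalues, up to a common scalar, as the second. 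Since an isomorphism of curves induces a projective equivalence of canonical models conjugating the automorphism groups inside $\operatorname{PGL}_6(\overline{k})$, this rules out an isomorphism with \emph{any} smooth plane quintic possessing an order-$8$ automorphism, even when the abstract groups coincide. This comparison of the representations on $H^0(C,\Omega^1_C)$ is the idea missing from your proposal.

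A second, smaller, flaw is in your step 2: you justify that $\phi=[X:\zeta_8Y:iZ]$ acts on $\mathcal{K}_{0,0,0}$ because it is ``an automorphism of the function field and hence of the smooth model.'' But $\mathcal{S}_{0,0,0}:X^5+XZ^4+Y^2Z^3=0$ is singular at $(0:1:0)$ and its geometric genus is strictly smaller than $6$, so $\mathcal{K}_{0,0,0}$ is \emph{not} birational to $\mathcal{S}_{0,0,0}$ and the function-field transfer does not apply; the clean justification is to verify directly that the displayed diagonal matrix sends each of the nine generators of the specialized ideal of Proposition \ref{generators} to a scalar multiple of itself (it does). Your closing caution --- that the parameter-independence of the rank computation in Proposition \ref{generators} should be re-verified by hand exactly at $A=B=C=0$, where the plane model degenerates --- is very well placed; that is indeed the most delicate point of the entire construction, for your argument and for the paper's alike.
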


\begin{proof} We only need to check that the curve $\mathcal{K}_{0,0,0}$ is not isomorphic to any one in the family $8$th in Theorem \ref{autoGps} with automorphism group $\mathbb{Z}/8\mathbb{Z}$. In order to check that we just look at the automorphism $[X:\zeta_8 Y:-Z]$ of order $8$ in this family acting in its canonical model. We mimic the previous computations and we get the matrix $\operatorname{diag}(\zeta_{8}^{5}, \zeta_{8}^{6}, \zeta_{8}, \zeta_{8}^{7}, \zeta_{8}^{2}, \zeta_{8}^{5})$. the group generated by this matrix is clearly non conjugated to the group generated by the one in the statement of the corollary. So, the curve  $\mathcal{K}_{0,0,0}$ does not have a smooth plane model.
\end{proof}

We reinterpret the existence of these special kind of families of plane curves in terms of $g^r_d$ linear series as follows: suppose that $\mathcal{C}$ is such a family describing a stratum of plane curves, and let $D$ be a divisor in $\operatorname{Div}(\mathcal{C})$ that defines a $g^2_{d}$ linear series for $\mathcal{C}$. In particular, $D$ is of degree $d$, and the vector space $\mathcal{L}(D)$ has dimension $2$. For some specializations of the parameters in the canonical family $\mathcal{K}$, given by the canonical embedding $\Phi:\,\mathcal{C}\hookrightarrow\mathbb{P}^{g-1}(\overline{k})$, one gets more automorphisms. Hence more symmetries in the defining equations, which in turns produce more meromorphic functions with poles bounded above by $\mathcal{D}=\Phi(D)$. Therefore, $dim(\mathcal{L}(\mathcal{D}))>2$, and we do not get a smooth $\overline{k}$-plane model anymore.

In our example the divisor $D$ generating the $g_{2}^{d}$-linear system is $D=Q+\sum_{i=1}^{4}R_i$, and $\mathcal{L}(\mathcal{D})$ is generated by $1,\frac{\omega_1}{\omega_0},\frac{\omega_2}{\omega_0}$. For the special choice of the parameters $A=B=C=0$, we get $D=5Q$ and $\mathcal{L}(\mathcal{D})$ contains the linearly independent functions  $1,\frac{\omega_1}{\omega_0},\frac{\omega_2}{\omega_0},\frac{\omega_3}{\omega_0}$, so the (projective) dimension of $\mathcal{L}(\mathcal{D})$ is greater than $2$ and it does not define a $g_{2}^{d}$ linear system anymore. This why, for this choice of the parameters we do not get a smooth plane model anymore, and it is due to the extra symmetries of the curve.

\subsection{Non-zero dimensional final strata for higher degree}
In this subsection we show examples of non-zero dimensional final strata in $\mathcal{M}_{g}^{Pl}$ for infinitely many $g$'s. 

To prove that they are final strata, that is, that there cannot be more automorphisms, we use similar techniques to the ones used in \cite{BaBa1, BaBa3}. In particular, we need the next Theorem, which follows when one combines \cite[\S 1-10]{Mit} or \cite[Theorem 4.8]{dolgachev2009finite}, and the proof of \cite[Theorem 2.1]{Ha}):

\begin{thm}[Mitchell, Harui]\label{Harui,Mitchell}
	Let $G$ be a subgroup of automorphisms of a smooth $\overline{k}$-plane curve $C$ of degree $d\geq4$, where $k$ has characteristic $p=0$. Then one of the following situations holds:
	\begin{enumerate}
		\item $G$ fixes a line and a point off this line.
		\item $G$ fixes a triangle and neither line nor a point is leaved invariant. In this case, $(C,G)$ is a descendant of the the Fermat curve $F_d:\,X^d+Y^d+Z^d=0$ or the Klein curve $K_d:\,XY^{d-1}+YZ^{d-1}+ZX^{d-1}=0$.
		\item $G$ is conjugate to a finite primitive subgroup of $\operatorname{PGL}_3(\overline{k})$ namely, the Klein group
		$\operatorname{PSL}(2,7)$, the icosahedral group $A_5$, the alternating group
		$\operatorname{A}_6$, the Hessian group $\operatorname{Hess}_{*}$ with $*\in\{36,72,216\}$.
		\end{enumerate}
\end{thm}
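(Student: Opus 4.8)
The plan is to reduce this geometric statement to a purely group-theoretic classification of finite subgroups of $\operatorname{PGL}_3(\overline{k})$ and then read off the three cases. The first step is \emph{linearization}: for a smooth plane curve $C$ of degree $d\geq 4$ the linear system $|\mathcal{O}_C(1)|$ cut out by lines is the \emph{unique} $g^2_d$ on $C$. This is where the hypothesis $d\geq 4$ enters; for $d=4$ it is the canonical system, and for $d\geq 5$ one invokes the classical uniqueness of the $g^2_d$ on a plane curve. Consequently every $\sigma\in\operatorname{Aut}(C)$ preserves this linear system, hence extends to a projective automorphism of $\mathbb{P}^2$, and one obtains an embedding $G\hookrightarrow\operatorname{PGL}_3(\overline{k})$. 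The problem is thereby transferred to understanding how a finite linear group can act.

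The second step is to invoke Mitchell's classification of finite subgroups of $\operatorname{PGL}_3$ in characteristic $0$, which is exactly the content imported from \cite{Mit} and \cite{dolgachev2009finite}; since any finite subgroup is conjugate into $\operatorname{PGL}_3(\overline{\mathbb{Q}})\subset\operatorname{PGL}_3(\mathbb{C})$, the complex classification applies verbatim. It yields a trichotomy: $G$ is either \emph{intransitive} (it fixes a point or, dually, a line), or \emph{imprimitive} while stabilizing a triangle (a set of three non-collinear points permuted by $G$), or \emph{primitive}, in which case $G$ is conjugate to one of the finitely many exceptional groups $\operatorname{PSL}(2,7)$, $A_5$, $A_6$ or $\operatorname{Hess}_{*}$. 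The primitive alternative is precisely case (3). In the intransitive alternative a short argument upgrades a single fixed point or line to a fixed point together with a fixed line not passing through it, using the induced action on the dual plane and averaging over the invariant flag, which places $G$ in case (1).

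The remaining, and genuinely substantial, step is Harui's analysis of the triangle-stabilizing case, producing case (2). Here I would normalize coordinates so that the triangle has vertices the three coordinate points; the subgroup $H\trianglelefteq G$ fixing all three vertices is then diagonal, and $G/H$ injects into $S_3$ acting by coordinate permutations. One must disentangle this from case (1): if $G$ fixes one vertex it also fixes the opposite edge and falls back to case (1), so case (2) is exactly the situation in which the image of $G$ in $S_3$ moves all three vertices and no line or point survives. The heart of the matter is to analyze the $G$-invariant defining polynomial $F$ of degree $d$: the diagonal action of $H$ restricts the supporting monomials drastically, smoothness of $C$ excludes the degenerate supports, and invariance under the nontrivial permutation part forces the surviving skeleton of $F$ to coincide with that of the Fermat curve $X^d+Y^d+Z^d$ or of the Klein curve $X^{d-1}Y+Y^{d-1}Z+Z^{d-1}X$; every such $C$ is then by definition a \emph{descendant} of one of these, as imported from \cite{Ha}.

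I expect the triangle case to be the main obstacle. The bookkeeping of admissible monomial supports under the combined diagonal-and-permutation action, together with the smoothness constraints that rule out the unwanted supports and force precisely the Fermat or Klein skeleton, is delicate; equally delicate is cleanly separating the overlap between cases (1) and (2) so that the trichotomy is genuinely exclusive. By contrast, the linearization step and the citation of Mitchell's list are comparatively formal once the uniqueness of the $g^2_d$ is granted.
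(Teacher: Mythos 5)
Your proposal is correct and takes essentially the same route as the paper: the paper gives no independent proof of this theorem, but states that it follows by combining Mitchell's classification of finite subgroups of $\operatorname{PGL}_3(\overline{k})$ (\cite[\S 1-10]{Mit}, or \cite[Theorem 4.8]{dolgachev2009finite}) with the proof of \cite[Theorem 2.1]{Ha}. Your sketch---linearization via the uniqueness of the $g^2_d$, Mitchell's intransitive/imprimitive/primitive trichotomy, the complete-reducibility upgrade of a fixed point (or line) to an invariant point-line pair, and Harui's analysis of the triangle-stabilizing case yielding the Fermat and Klein descendants---is precisely the argument that those citations encapsulate.
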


\begin{rem}\label{postive}
We follow the discussion in \cite[\S6]{BaBa1} to deduce that Theorem \ref{Harui,Mitchell} holds when the characteristic $p>2g+1$, where $g$ denotes the geometric genus of the curve $C$.
\end{rem}
From now on we assume that $k$ has characteristic $p=0$ or $p>2g+1$.
 
First, we prove:
\begin{thm}\label{odddegreefinal}
Let $\mathcal{C}$ be the family of smooth $\overline{k}$-plane curves of an odd degree $d\geq7$, defined in the following way:
\begin{itemize}
  \item if $d\equiv1\text{ mod }4$, then
    $$\mathcal{C}:\,X^d+X^{d-2}Y^2+X^{(d-1)/2}YZ^{(d-1)/2}+aXY^{d-1}+bXZ^{d-1}+cY^{(d+1)/2}Z^{(d-1)/2}=0$$

  \item if $d\equiv3\text{ mod }4$, then
  $$\mathcal{C}:\,X^d+X^{d-2}Y^2+X^{(d-1)/2}YZ^{(d-1)/2}+aXY^{d-1}+bXZ^{d-1}+cX^2Y^{(d-1)/2}Z^{(d-1)/2}=0$$
\end{itemize}
Then $\operatorname{Aut}(\mathcal{C})=\langle[X:-Y:\zeta_{d-1}Z]\rangle$, hence it is isomorphic to $\Z/(d-1)\Z$.
\end{thm}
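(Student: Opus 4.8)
The plan is to prove the two inclusions separately. For $\langle[X:-Y:\zeta_{d-1}Z]\rangle\subseteq\operatorname{Aut}(\mathcal{C})$ I would first check directly that $\sigma:=[X:-Y:\zeta_{d-1}Z]=\operatorname{diag}(1,-1,\zeta_{d-1})$ leaves each defining equation invariant: using $\zeta_{d-1}^{(d-1)/2}=-1$ together with the parity of $(d+1)/2$ and $(d-1)/2$ in each residue class of $d$ modulo $4$, one verifies that every monomial of $F$ is scaled by the same factor, so $F$ is fixed. A short computation in $\operatorname{PGL}_3(\overline{k})$ (one needs $(-1)^n=\zeta_{d-1}^n=1$, and $d-1$ even forces $n=d-1$) shows $\sigma$ has order $d-1$, whence $\langle\sigma\rangle\cong\Z/(d-1)\Z$.

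The substance is the reverse inclusion $\operatorname{Aut}(\mathcal{C})\subseteq\langle\sigma\rangle$, for which I would invoke the Mitchell--Harui trichotomy of Theorem~\ref{Harui,Mitchell} applied to $G:=\operatorname{Aut}(\mathcal{C})$. The key observation driving everything is that for $d\geq7$ the matrix $\sigma$ has three \emph{distinct} eigenvalues $1,-1,\zeta_{d-1}$, so its only fixed points in $\mathbb{P}^2$ are the three coordinate vertices and its only invariant lines are the three coordinate lines. \textbf{Primitive case.} The finitely many primitive subgroups of $\operatorname{PGL}_3(\overline{k})$ have uniformly bounded exponent, so for $d$ large the element $\sigma$ of order $d-1$ cannot lie in one of them; the finitely many remaining small degrees are excluded by the known degree restrictions for a primitive action (for instance $3\mid d$ for the Hessian groups, and the rigid invariant curves for $\operatorname{PSL}(2,7)$, $A_5$, $A_6$), none of which is met by our three-parameter family. \textbf{Triangle case.} If $G$ fixed a triangle $T$ but no point or line, then $\sigma$ would permute the vertices of $T$; the induced permutation has order dividing $3$, so a power of $\sigma$ fixes all three vertices, and for $d\geq7$ this power still has distinct eigenvalues, forcing $T$ to be the coordinate triangle. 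Then $G$ would contain a nontrivial coordinate permutation, which is impossible because $F$ is manifestly asymmetric: $X^d$ occurs but neither $Y^d$ nor $Z^d$ (ruling out permutations moving $X$), and $X^{d-2}Y^2$ occurs but not $X^{d-2}Z^2$ (ruling out the transposition of $Y$ and $Z$).

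This leaves Case~$(1)$: $G$ fixes a point $P$ and a non-incident line $L$. Since $P$ and $L$ are $\sigma$-invariant, $P$ is a coordinate vertex and $L$ the opposite coordinate line; after relabelling, $P=[1:0:0]$ and $L=\{X=0\}$, so $G\subseteq\operatorname{GL}_{2,X}(\overline{k})$ in the notation of Theorem~\ref{genorm}. It then remains to show that the only elements $[\lambda X:Y':Z']$, with $Y',Z'$ linear in $Y,Z$, preserving $F$ are powers of $\sigma$. Grading $F$ by the power of $X$, such a transformation must send each graded piece to a scalar multiple of itself: the piece $X^{d-2}Y^{2}$ forces the linear form $Y$ to be an eigenform, and then the piece $X^{(d-1)/2}YZ^{(d-1)/2}$ forces $Z$ to be an eigenform too, so the $\operatorname{GL}_2$-part is diagonal. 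A final coefficient matching (exactly as in the normalizer computation of Theorem~\ref{genorm}) pins the diagonal down to $\operatorname{diag}(1,(-1)^{j},\zeta_{d-1}^{j})=\sigma^{j}$, giving $\operatorname{Aut}(\mathcal{C})=\langle\sigma\rangle$.

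I expect the main obstacle to be the uniform elimination of the primitive and triangle cases rather than the linear-algebra in Case~$(1)$: the monomial bookkeeping there is routine once the fixed flag is identified, but ruling out the larger configurations requires the global input of Theorem~\ref{Harui,Mitchell} and careful treatment of the handful of small degrees $d$ where $\sigma$ or one of its powers degenerates to a repeated eigenvalue (so that its fixed locus is no longer three isolated points), which I would verify by hand.
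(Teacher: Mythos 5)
Your overall architecture --- verifying that $\sigma=[X:-Y:\zeta_{d-1}Z]$ preserves $F$ and has order $d-1$, invoking Theorem~\ref{Harui,Mitchell}, killing the primitive case by element orders, and reducing Case~(1) to diagonal matrices via the grading by powers of the distinguished coordinate --- is the same as the paper's, and those parts are sound. However, your treatment of the triangle case has a genuine gap. You claim that the permutation induced by $\sigma$ on the vertices of an invariant triangle $T$ has order dividing $3$, and that the corresponding power of $\sigma$ still has three distinct eigenvalues, forcing $T$ to be the coordinate triangle. Both claims fail: the induced permutation can be a transposition, and the relevant power is then $\sigma^{2}=\operatorname{diag}(1,1,\zeta_{d-1}^{2})$, which has the repeated eigenvalue $1$ (its fixed locus is the whole line $\{Z=0\}$ together with the point $(0:0:1)$); likewise $\sigma^{6}=\operatorname{diag}(1,1,\zeta_{d-1}^{6})$, which is even the identity when $d=7$. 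Concretely, the triangle with vertices $(0:0:1)$, $(1:1:0)$, $(1:-1:0)$ is $\sigma$-invariant ($\sigma$ fixes the first vertex, swaps the other two, and permutes the three sides accordingly), so $\sigma$-invariance alone does \emph{not} pin $T$ down to the coordinate triangle, and your monomial-asymmetry argument never gets to be applied in this subcase.

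The paper closes this case by using the full strength of part~(2) of Theorem~\ref{Harui,Mitchell}: there $(\mathcal{C},G)$ is a \emph{descendant} of $F_d$ or $K_d$, so $G$ embeds into $\operatorname{Aut}(F_d)$ or $\operatorname{Aut}(K_d)$, and hence the order $d-1$ of $\sigma$ must divide $|\operatorname{Aut}(F_d)|=6d^{2}$ or $|\operatorname{Aut}(K_d)|=3(d^{2}-3d+3)$. Since $\gcd(d-1,d^{2})=1$ and $d^{2}-3d+3\equiv 1 \pmod{d-1}$, this forces $d-1\mid 6$ (Fermat) resp.\ $d-1\mid 3$ (Klein); as $d-1\geq 6$, only $d=7$ with the Fermat curve survives, and that is excluded by listing the element orders of $\operatorname{Aut}(F_7)$, none of which is $6$. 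You should replace your geometric triangle argument by this divisibility argument (or else complete yours by also handling non-coordinate invariant triangles, i.e.\ by showing that no element of $\operatorname{PGL}_3(\overline{k})$ inducing a $3$-cycle on such a triangle can preserve $F$ --- considerably more work). A smaller weakness of the same kind: in the primitive case the only degree actually needing attention is $d=7$ against the Hessian groups, and the paper disposes of it by citing the discussion around \cite[Proposition 12]{BaBa3}; your appeal to a restriction such as ``$3\mid d$ for the Hessian groups'' is asserted without proof or reference, so make sure whatever you invoke there is actually available.
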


\begin{proof}
We first note that $\eta:\,(X:Y:Z)\mapsto (X:-Y:\zeta_{d-1}Z)$ defines an automorphism of $\mathcal{C}$ of order $d-1$, and also $ab\neq0$ by non-singularity. we will see that $\operatorname{Aut}(\mathcal{C})$ is not conjugate to any of the primitive groups in $\operatorname{PGL}_3(\overline{k})$ mentioned in Theorem \ref{Harui,Mitchell}. In particular, it should fix a point, a line, or a triangle.

We just handle the situation for the Hessian groups $\operatorname{Hess}_{*}$, since $\operatorname{A}_6$ (so does $\operatorname{A}_5$) has no elements of order $d-1\geq6$, and the Klein group $\operatorname{PSL}(2,7)$ (the only simple group of order $168$) has no elements of order $6$ or $\geq8$ inside. Moreover, any element of $\operatorname{Hess}_{*}$
has order $1,\,2,\,3,\,4,$ or $6$. Then $\operatorname{Hess}_{*}$  does not appear as the full automorphism group of $\mathcal{C}$, except possibly for $d=7$ and $*=216$. But, following the same discussion that we did for \cite[Proposition 12]{BaBa3}, one easily deduces that there exists no smooth $\overline{k}$-plane curve of degree $7$, whose automorphism group is conjugate to $\operatorname{Hess}_{216}$.

It is well known that $|Aut(F_d)|=6d^2$ and $|Aut(K_d)|=3(d^2-3d+3)$, see, for instance, \cite[Propositions $3.3,\,3.5$]{Ha}. Hence, $\mathcal{C}$ can not be a descendant of the Klein curve $K_d$ or the Fermat curve $F_d$, since, $d-1$ does not divide $3(d^2-3d+3),$ or $6d^2$ (except possibly when $d=7$). On the other hand, the automorphisms of $F_7:\,X^7+Y^7+Z^7=0$ have the shape $[X:\zeta_{7}^aY:\zeta_{7}^bZ],\,[\zeta_{7}^bZ:\zeta_{7}^aY:X],\,[X:\zeta_{7}^bZ:\zeta_{7}^aY],\,[\zeta_{7}^aY:X:\zeta_{7}^bZ],
\,[\zeta_{7}^aY:\zeta_{7}^bZ:X],$ or $[\zeta_{7}^bZ:X:\zeta_{7}^aY].$ Non of these automorphisms has has order $6$, so $\mathcal{C}$ with $d=7$ is not a descendant of $F_7$, as well. Therefore, $\operatorname{Aut}(\mathcal{C})\hookrightarrow PGL_{3}(\overline{k})$ must fix a line and a point off this line.

In particular, the fixed line is one of the reference lines $B=0$ with $B\in\{X,Y,Z\}$, and the point is one of the reference points $(1:0:0),\,(0:1:0),$ or $(0:0:1)$, because $\eta\in\operatorname{Aut}(\mathcal{C})$ does. Consequently, all automorphisms of $\mathcal{C}$ are all of the shapes either $[X:vY+wZ:sY+tZ],\,[vX+wZ:Y:sX+tZ],$ or $[vX+wY:sX+tY:Z]$. In any case, we always get $s=w=0$ through the term $X^{d-2}Y^2$, and all automorphisms of $\mathcal{C}$ are diagonal, say $\operatorname{diag}(1,\lambda,\mu).$ We then obtain $\lambda^2=\lambda\mu^{(d-1)/2}=\lambda^{d-1}=\mu^{d-1}=1$, that is, $\lambda=1,\,\mu=\zeta_{d-1}^{2r}$ or $\lambda=-1,\,\mu=\zeta_{d-1}^{2r+1}$ for $r\in\{0,1,...,d-2\}$. Consequently, $|\operatorname{Aut}(\mathcal{C})|=d-1$.	
\end{proof}

\begin{cor}\label{dimffodd}
	The family $\mathcal{C}$ given by Theorem \ref{odddegreefinal} describes a final stratum of dimension $3$.
\end{cor}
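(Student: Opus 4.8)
The plan is to prove the two assertions separately: that the stratum is final, and that it has dimension exactly $3$. The first is essentially a restatement of Theorem \ref{odddegreefinal}. Since every smooth member of $\mathcal{C}$ has full automorphism group equal to $\langle[X:-Y:\zeta_{d-1}Z]\rangle\cong\Z/(d-1)\Z$ — and this holds for \emph{all} admissible values of $a,b,c$, because the monomials driving the argument (notably $X^{d-2}Y^2$, $X^{(d-1)/2}YZ^{(d-1)/2}$, $XY^{d-1}$ and $XZ^{d-1}$) occur with fixed nonzero coefficients — no specialization of $(a,b,c)$ can enlarge the automorphism group while keeping the curve smooth. A stratum properly contained in this one would carry a strictly larger group, which never occurs; hence the stratum is final. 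As in the genus $6$ discussion above, the specializations that do produce extra symmetries necessarily leave the locus of smooth plane curves.

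For the dimension I would argue exactly as in the proof of Theorem \ref{complete}. Let $V$ be the six-dimensional $\overline{k}$-vector space spanned by the monomials appearing in $\mathcal{C}$, so that $\mathcal{C}$ is a slice of the linear system $\mathbb{P}(V)\cong\mathbb{P}^5$. Two smooth plane curves of degree $d\geq 4$ are isomorphic only through an element of $\operatorname{PGL}_3(\overline{k})$, and if both carry $\varrho(G)=\langle\operatorname{diag}(1,-1,\zeta_{d-1})\rangle$ as their full automorphism group, that element must lie in the normalizer $N_{\varrho(G)}(\overline{k})$. Since for $d\geq 7$ the eigenvalues $1,-1,\zeta_{d-1}$ are pairwise distinct, the argument of Theorem \ref{genorm} shows that $N_{\varrho(G)}(\overline{k})$ is generated by the diagonal torus $D(\overline{k})$ (of dimension $2$) together with finitely many monomial matrices. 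The fibres of the map from $\mathbb{P}(V)$ to $\mathcal{M}_g$ are therefore the normalizer orbits, and the dimension of the stratum equals $\dim\mathbb{P}(V)-\dim D=5-2=3$, provided the torus $D$ acts on a generic point of $\mathbb{P}(V)$ with finite stabilizer.

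Establishing this finite-stabilizer statement is the one computation that must be carried out, and it is the crux of the argument. I would feed a diagonal matrix $\operatorname{diag}(u,v,w)$ into a generic $F\in V$ and require all its monomials to rescale by a common factor. The three fixed-coefficient (anchor) monomials $X^{d}$, $X^{d-2}Y^{2}$ and $X^{(d-1)/2}YZ^{(d-1)/2}$ already force
\begin{equation*}
u^{d}=u^{d-2}v^{2},\qquad u^{d}=u^{(d-1)/2}\,v\,w^{(d-1)/2},
\end{equation*}
that is $v^{2}=u^{2}$ and then $(w/u)^{d-1}=1$, so that modulo the overall projective scalar only finitely many $(u:v:w)$ survive; hence the generic stabilizer in $D$ is finite and the generic $D$-orbit is $2$-dimensional. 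Because these three anchor monomials occur in both defining equations — the only difference between the cases $d\equiv1$ and $d\equiv3\pmod 4$ being the sixth monomial, which plays no role here — the computation is uniform, and we conclude $\dim=3$. The main obstacle, as expected, is precisely this torus-stabilizer verification: one must confirm that the three fixed-coefficient monomials of $\mathcal{C}$ are independent enough to collapse the $2$-dimensional torus to a finite group, so that $a,b,c$ are genuine moduli rather than artefacts removable by a further change of coordinates.
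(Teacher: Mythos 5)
Your proof is correct and follows essentially the route the paper intends: the paper states this corollary without proof, finality being immediate from Theorem \ref{odddegreefinal} (the full automorphism group is the same $\Z/(d-1)\Z$ for \emph{every} admissible choice of $(a,b,c)$), and the dimension count being the same normalizer-orbit technique used elsewhere in the paper (as in Lemma \ref{whygeom} and Theorem \ref{complete}). Your explicit check that the fixed-coefficient monomials $X^d$, $X^{d-2}Y^2$, $X^{(d-1)/2}YZ^{(d-1)/2}$ force a finite stabilizer in the diagonal torus $D(\overline{k})$ --- so that the three parameters are genuine moduli and the image has dimension $5-2=3$ --- is precisely the detail the paper leaves implicit, and it is carried out correctly.
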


Second, for even degrees $d\geq10$, we construct the family:
\begin{thm}\label{evendegreefinal}
For any $d=2m\geq10$ with $m$ odd, let $\mathcal{C'}$ be the family of smooth $\overline{k}$-plane curves defined by  $$X^{d}+Y^{d}+Z^{d}+X^{m}Z^m+X^{d-2}Y^2+a_2X^{d-4}Y^{4}+a_3X^{d-6}Y^{6}+...+a_{m-1}X^{2}Y^{d-2}+a_mX^{m-2}Y^2Z^{m}=0.$$
Then $\operatorname{Aut}(\mathcal{C'})=\langle[X:-Y:\zeta_mZ]\rangle$, in particular, it is cyclic of order $d$.

\end{thm}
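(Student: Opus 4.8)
The plan is to follow the strategy of Theorem \ref{odddegreefinal} and use the Mitchell--Harui trichotomy (Theorem \ref{Harui,Mitchell}). First I would check that $\eta:\,(X:Y:Z)\mapsto(X:-Y:\zeta_m Z)$ preserves $\mathcal{C'}$: every monomial of the defining equation has even $Y$-exponent and $Z$-exponent divisible by $m$, so each monomial is fixed by $\eta$; since $m$ is odd, $\eta$ has order $\operatorname{lcm}(2,m)=2m=d$, and smoothness of the members of the family forces the coefficients used below (in particular $a_m$ and the leading ones) to be nonzero. Because $\operatorname{Aut}(\mathcal{C'})$ contains $\eta$ of order $d=2m\geq10$, it is not conjugate to any of the primitive groups $\operatorname{PSL}(2,7)$, $A_5$, $A_6$, $\operatorname{Hess}_{*}$, whose element orders are all $\leq 7<d$. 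It is also not a descendant of the Klein curve $K_d$: if it were, then $d=\operatorname{ord}(\eta)$ would divide $|\operatorname{Aut}(K_d)|=3(d^2-3d+3)\equiv 9\ (\mathrm{mod}\ d)$, forcing $d\mid 9$, impossible for $d$ even and $\geq10$.

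The main obstacle is the Fermat case, since here the order argument of Theorem \ref{odddegreefinal} breaks down: $d$ always divides $|\operatorname{Aut}(F_d)|=6d^2$. I would instead analyze the action of $\langle\eta\rangle$ on any $\operatorname{Aut}(\mathcal{C'})$-stable triangle $T$. The key point is that $\eta^2=\operatorname{diag}(1,1,\zeta_m^{2})$ is a homology of order $m\geq5$, hence has a repeated eigenvalue; a monomial matrix inducing a $3$-cycle on the sides of $T$ has three distinct eigenvalues (the cube roots of the product of its nonzero entries), so $\eta^2$ cannot act as a $3$-cycle on $T$, and since its image in $S_3$ has order dividing the odd number $m$ it is not a transposition either. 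Hence $\eta^2$ fixes each side of $T$, which forces $T$ to have its vertex at the center $(0:0:1)$ of $\eta^2$ and its opposite side along the axis $Z=0$, the other two vertices lying on $Z=0$. If $\eta$ fixes those two vertices as well, then $T$ is the coordinate triangle and $\operatorname{Aut}(\mathcal{C'})$ consists of monomial matrices; a $3$-cycle among them is impossible because the monomial support of the equation is not invariant under any cyclic permutation of the variables (for instance $X^{d-2}Y^2\mapsto Y^{d-2}Z^2$, which is absent). If instead $\eta$ swaps the two vertices, a $3$-cycle $g\in\operatorname{Aut}(\mathcal{C'})$ conjugates $\eta^2$ into homologies of order $m$ centered at the other two vertices; moving $T$ to the coordinate triangle by a transformation that mixes only $X$ and $Y$ (and scales $Z$), these homologies would force the binary form $B(X,Y):=F(X,Y,0)$ to become a combination of $X^{2m},X^mY^m,Y^{2m}$, i.e. to acquire an order-$m$ cyclic symmetry, which a generic member of the family does not have. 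Thus $\operatorname{Aut}(\mathcal{C'})$ falls into case (1): it fixes a line and a point off it.

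Since $\eta$ is diagonal with the three distinct eigenvalues $1,-1,\zeta_m$, this common fixed point and line must be a reference point and the opposite reference line, so every $\phi\in\operatorname{Aut}(\mathcal{C'})$ has one of the three block shapes $[X:vY+wZ:sY+tZ]$, $[vX+wZ:Y:sX+tZ]$, or $[vX+wY:sX+tY:Z]$. In each case one of the pure powers $X^d,Y^d,Z^d$ is fixed, forcing the multiplier to be $1$, and then the asymmetric monomials do the rest: the term $X^{d-2}Y^2$ (paired with $X^{m-2}Y^2Z^m$) and the term $X^mZ^m$ let me match the unique high-multiplicity root of an auxiliary binary form and read off the vanishing of a cross coefficient, yielding $w=s=0$ in every case. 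Hence every automorphism is diagonal.

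To conclude, a diagonal map $\operatorname{diag}(1,\lambda,\mu)$ preserves $F$ exactly when $\lambda^{b}\mu^{c}$ is the same for all monomials $X^aY^bZ^c$ occurring; normalizing by $X^d$ this means all these weights equal $1$. The monomials $X^{d-2}Y^2$ and $X^mZ^m$ give $\lambda^2=1$ and $\mu^m=1$, and all the remaining relations ($\lambda^d=\mu^d=1$, $\lambda^{2i}=1$, $\lambda^2\mu^m=1$) follow from these. Therefore the diagonal automorphisms form $\{\operatorname{diag}(1,\lambda,\mu):\lambda^2=1,\ \mu^m=1\}\cong\Z/2\Z\times\Z/m\Z\cong\Z/d\Z$ (as $m$ is odd), generated precisely by $\eta$, so $\operatorname{Aut}(\mathcal{C'})=\langle[X:-Y:\zeta_mZ]\rangle\cong\Z/d\Z$. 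The delicate step is the exclusion of the Fermat descendant in the second paragraph; the diagonalization is the main computation but is routine given the special monomials.
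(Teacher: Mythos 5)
Your proof is correct, and while it shares the paper's skeleton (exhibit $\eta=[X:-Y:\zeta_mZ]$ of order $d$, exclude the primitive groups by element orders, exclude the Klein descendant because $d\nmid 3(d^2-3d+3)$, then reduce to the three block shapes fixing a reference point and line and diagonalize through the monomials $X^{d-2}Y^2$, $X^mZ^m$, $X^{m-2}Y^2Z^m$, ending with $\lambda^2=\mu^m=1$ and $\Z/2\Z\times\Z/m\Z\cong\Z/d\Z$), it handles the one genuinely delicate step, the Fermat alternative, by a different argument. The paper stays inside $\operatorname{Aut}(F_d)$: it asserts that every order-$d$ non-homology of $\operatorname{Aut}(F_d)$ is $\operatorname{Aut}(F_d)$-conjugate to $[X:-Y:\zeta_mZ]$ or $[X:\zeta_mZ:-Y]$, separates these two by their $\operatorname{PGL}_3(\overline{k})$-classes, deduces that the conjugating matrix $A$ may be taken to commute with $\eta$ (hence diagonal), and concludes $\operatorname{Aut}(\mathcal{C'})\cap\operatorname{Aut}(F_d)=\langle\eta\rangle$. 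You never enter $\operatorname{Aut}(F_d)$: you let the homology $\eta^2=\operatorname{diag}(1,1,\zeta_m^2)$ act on an $\operatorname{Aut}(\mathcal{C'})$-stable triangle, use the repeated eigenvalue to forbid a $3$-cycle action and the odd order to forbid a transposition, pin the triangle down, and then kill $3$-cycles in $\operatorname{Aut}(\mathcal{C'})$ either via the monomial support (coordinate-triangle case) or via the induced order-$m$ symmetry that the binary form $F(X,Y,0)$ would have to acquire (swapped-vertices case). Your route buys robustness: the paper's conjugacy claim is in fact problematic, since for example $[X:Z:\zeta_d^{-4}Y]\in\operatorname{Aut}(F_d)$ has order $d$, is not a homology, and is $\operatorname{PGL}_3(\overline{k})$-conjugate to $\eta$ (its eigenvalues $1,\pm\zeta_m^{-1}$ agree with those of $\eta$ up to the scalar $\zeta_m$) while not being $\operatorname{Aut}(F_d)$-conjugate to the diagonal $\eta$ (conjugation by monomial matrices preserves the permutation type); this is exactly your ``$\eta$ swaps the two vertices'' configuration, which the paper's reduction glosses over and you treat head-on. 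The price is that your Case B exclusion is only generic and is asserted rather than proved: that $X^{2m}+Y^{2m}+X^{2m-2}Y^2+\sum_i a_iX^{2m-2i}Y^{2i}$ is not $\operatorname{GL}_2$-equivalent to a combination of $X^{2m},X^mY^m,Y^{2m}$ for general $a_i$ does need an argument --- for $m\geq7$ a dimension count suffices (such special forms sweep out a locus of dimension at most $4$ in $\mathbb{P}^{2m}$, against your $(m-2)$-parameter family), and for $m=5$ one can compare moduli of $\zeta_5$-invariant $10$-point sets on $\mathbb{P}^1$ (one parameter) with the three-parameter family of root divisors. Since the theorem is in any case a statement about the generic member of the family (which is all Corollary \ref{corevendegree} uses, and the paper itself imposes genericity implicitly), this is a fixable omission rather than an error, but it should be recorded.
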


\begin{proof}
Similarly, $\operatorname{Aut}(\mathcal{C'})$ is not conjugate to any of the finite primitive subgroups of $\operatorname{PGL}_{3}(\overline{k})$, and also $\mathcal{C}'$ is not a descendant of the Klein curve $K_d$. So $\operatorname{Aut}(\mathcal{C'})$ fixes a line and a point off this line, or $\mathcal{C'}$ is a descendant of the Fermat curve $F_d$. In the first situation, we deduce that it is cyclic of order $d$, by using the same discussion of Theorem \ref{odddegreefinal} through the monomials $X^mZ^m$ and $X^{d-2}Y^2$. On the other hand, any automorphism of $F_d$ of order $d$, which is not a homology type\footnote{i.e., it fixes only three points in the plane} is conjugate, inside $\operatorname{Aut}(F_d)$, to either $[X:-Y:\zeta_mZ]$, or $[X:\zeta_mZ:-Y]$. But also both automorphisms are in different conjugacy classes in $\operatorname{PGL}_{3}(\overline{k})$, so, if $\mathcal{C}$ is a descendant of the Fermat curve, through a transformation $A\in\operatorname{PGL}_3(\overline{k})$, then we may assume that $A^{-1}[X:-Y:\zeta_mZ]A=[X:-Y:\zeta_mZ]$. In particular, the matrix $A$ must be a diagonal one, say $\operatorname{diag}(1,\lambda,\mu)$ such that $\lambda^{d}=\mu^{d}=\mu^{m}=\lambda^2=1$. Therefore, $\lambda=\pm1$ and $\mu=\zeta_m^a$ for some integer $a$, and thus $A\in\operatorname{Aut}(\mathcal{C})$. Moreover, $\operatorname{Aut}(\mathcal{C})\cap\operatorname{Aut}(F_d)=\langle[X:-Y:\zeta_mZ]\rangle$. This completes the proof.

\end{proof}

\begin{cor}\label{corevendegree}
	The family $\mathcal{C'}$ in Theorem \ref{evendegreefinal} describes a final stratum of dimension at least $m-3$.
\end{cor}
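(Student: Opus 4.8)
The plan is to prove Corollary \ref{corevendegree} by combining the automorphism computation of Theorem \ref{evendegreefinal} (which establishes that the stratum is \emph{final}, since $\operatorname{Aut}(\mathcal{C}')$ is exactly $\Z/d\Z$ and no specialization of the parameters $a_2,\dots,a_m$ enlarges it) with a dimension count for the parameter space modulo isomorphisms. First I would observe that the family $\mathcal{C}'$ is cut out by the $m-1$ free parameters $a_2,a_3,\dots,a_m$, so the naive parameter count is $m-1$. The dimension of the corresponding stratum in $\mathcal{M}_d^{Pl}$ is then $m-1$ minus the dimension of the group of projective transformations that preserve the shape of the family (i.e.\ the relevant normalizer/stabilizer acting on the coefficients), exactly as the dimensions in Figure \ref{diaguay} were read off from the schemes $\verb"S"$ in Theorem \ref{complete}.

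Next I would identify which isomorphisms move one member of the family to another. Since every such isomorphism must normalize the common automorphism group $\langle[X:-Y:\zeta_m Z]\rangle$, and by the same reasoning used in Theorem \ref{evendegreefinal} (fixing a line and a point off it, respecting the monomials $X^m Z^m$ and $X^{d-2}Y^2$) it must be diagonal, I would compute the diagonal subgroup preserving the monomial support of $\mathcal{C}'$. A diagonal scaling $\operatorname{diag}(1,\lambda,\mu)$ acts on the coefficients, and after normalizing the fixed leading monomials one is left with a small-dimensional torus acting on the $(a_2,\dots,a_m)$; quotienting by this action drops the count by the torus dimension. Carrying out this bookkeeping yields a lower bound of $m-3$ on the dimension of the quotient, which is the asserted bound ``at least $m-3$''. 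The inequality rather than equality reflects that I am only bounding below by exhibiting enough independent parameters surviving the group action, without claiming the family is representative or that no further coincidences occur.

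The main obstacle I expect is precisely controlling the action of the normalizing torus on the parameters and verifying that at least $m-3$ of the $a_i$ remain genuinely independent moduli after the quotient, rather than merely asserting the crude difference $m-1$ minus a constant. One must check that the scaling relations imposed by preserving $X^d+Y^d+Z^d$, $X^m Z^m$, and $X^{d-2}Y^2$ pin down $\lambda,\mu$ up to a finite group (so the torus acting nontrivially on the $a_i$ has the expected small dimension), and that the induced action on the affine $(a_2,\dots,a_m)$-space is not so degenerate as to collapse more than two dimensions. I would also note that the result only claims a \emph{lower} bound, so it suffices to produce an explicit $(m-3)$-dimensional subfamily of pairwise non-isomorphic smooth members; this sidesteps the harder problem of computing the exact stratum dimension and of proving genericity of smoothness, which would otherwise require a Jacobian-criterion argument uniform in the parameters analogous to the one in Proposition \ref{generators}.
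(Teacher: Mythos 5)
Your argument is correct, and it reconstructs precisely what the paper leaves implicit: Corollary \ref{corevendegree} is stated without any proof, its content being (i) finality, which follows because the proof of Theorem \ref{evendegreefinal} applies verbatim to every smooth member of the family (the monomials $X^mZ^m$ and $X^{d-2}Y^2$ carry coefficient $1$ for all parameter values), so no specialization of $(a_2,\dots,a_m)$ can enlarge the automorphism group, and (ii) the dimension bound, obtained exactly as you do by subtracting from the $m-1$ parameters the dimension of the group through which two members could be isomorphic, namely the normalizer of $\varrho(\Z/d\Z)=\langle\operatorname{diag}(1,-1,\zeta_m)\rangle$, whose connected component is the $2$-dimensional diagonal torus $D(\overline{k})$. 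One refinement: your own observation that preserving the coefficients of $X^d+Y^d+Z^d$, $X^mZ^m$ and $X^{d-2}Y^2$ forces $\lambda^2=\mu^m=1$ (up to an overall scalar) shows that the fibers are in fact \emph{finite} orbits, so the image of the family has dimension exactly $m-1$ and the paper's $m-3$ is just the crude bound ``parameters minus $\dim D(\overline{k})$''; the only ingredient worth making explicit is that smoothness is an open condition on the affine parameter space, so a single smooth member (implicit in the statement of Theorem \ref{evendegreefinal}) guarantees the smooth locus has full dimension $m-1$.
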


\begin{rem}\label{importantffeven}
Theorem \ref{evendegreefinal} and Corollary \ref{corevendegree} are still true when $m$ is even, but for simplicity we need to impose $m\geq8$ in order to exclude the primitive groups in $\operatorname{PGL}_{3}(\overline{k})$.
\end{rem}

\end{document}